\documentclass[a4paper,12pt]{article}

\usepackage[english]{babel}
\usepackage[utf8]{inputenc}
\usepackage{amsmath}
\usepackage{amsfonts}
\usepackage{graphicx}
\usepackage[colorinlistoftodos]{todonotes}
\usepackage{tikz-cd}
\usepackage{amsthm}
\usepackage{lipsum}

\theoremstyle{definition}
\newtheorem{definition}{Definition}[subsection]
\newtheorem{example}{Example}[subsection]
\newtheorem{remark}{Remark}[subsection]
\theoremstyle{plain}

\newtheorem{theorem}{Theorem}[subsection]
\newtheorem{lemma}{Lemma}[subsection]
\newtheorem{corollary}{Corollary}[subsection]
\DeclareMathOperator{\lt}{LT}
\DeclareMathOperator{\lc}{LC}

\DeclareMathOperator{\Ima}{Im}

\title{A case study of Gr\"{o}bner bases and Hilbert series}
\author{Soutrik Roy Chowdhury \footnote{Author is grateful to School of Mathematics, Trinity College Dublin for providing financial support to complete his masters research work there and Prof Vladimir Dotsenko for advising the thesis.}}
\newcommand{\Addresses}{{
  \bigskip
  \footnotesize

   \textsc{School of Mathematics, Trinity College Dublin,
     Dublin 2}\par\nopagebreak
  \textit{E-mail address}-- \texttt{roychows@maths.tcd.ie}
}}
\date{}

\begin{document}
\maketitle

\begin{abstract}
In this expository writing I will give an introduction to Gr\"{o}bner bases and how to use it to compute Hilbert series from chains.\\ 
\textbf{Key words:} Diamond lemma, normal monomials, Hilbert series.
\end{abstract}
\tableofcontents
\section{Introduction}
 Suppose we have a polynomial ring (either commutative or non-commutative) in either one variable or multivariables. Also we are given an ideal $I$ of that. Let $f$ be a polynomial belonging to the ring. Now the question is whether it belongs to $I$ or not? When we have a single variable polynomial ring, we can use our known division algorithm to say whether $f$ belongs to $I$ or not. But in case of a multivariable polynomial ring the computation is very complex as we can not apply our division algorithm. Then the concept of Gr\"{o}bner bases easily solve this complexity.\\ 
 Next we are aware of the concepts of Hilbert series for a graded spaces/algebras. We will use the concept of Gr\"{o}bner bases to compute Hilbert series of some algebras through chains. This will lead us to find an exact sequence which can primarily satisfy our need but it can not tell us about the initial algebra. In the further work section we will mention about Anick's resolution which will fulfill the lack discussed in the previous section.\\ 
 So let us begin with the concept of Gr\"{o}bner bases:
 
\section{Background for Gr\"{o}bner Bases}

\subsection{Algebras}
\begin{definition}
An \textbf{algebra} is a vector space $V$(over a field $\mathbb{K}$)equipped with a multiplication $V \otimes V \rightarrow V$ with the following properties:
\begin{itemize}
\item $(x + y)z = xz + yz$ for $x,y,z \in V$
\item $x(y +z) = xy +xz$ for $x,y,z \in V$
\item $(ab)(xy) = (ax)(by)$ where $x,y \in V$ and $a,b \in \mathbb{K}$.
\end{itemize}
\end{definition}
\begin{example}
Algebra of polynomials $\mathbb{K}[x_1,x_2,\dots,x_n]$.
\end{example}
\begin{definition}
An algebra is \textbf{associative} if for the multiplication $\mu : V \otimes V \rightarrow V$ we have the equality
\begin{equation}
\mu(\mu \otimes id) = \mu( id \otimes \mu).
\end{equation}
That associative algebra is called \textbf{commutative} if for any $v_1,v_2 \in V$ we have
\begin{equation}
\mu(v_1,v_2) = \mu(v_2,v_1).
\end{equation}
It is \textbf{non-commutative} if the above equality fails for at least one pair of $v_1,v_2 \in V$.
\end{definition}
\begin{example}
Algebra of commutative polynomials with either single or mutlivariables over a field $\mathbb{K}$ is an example of commutative associative algebra.We usually denote it by $\mathbb{K}(x_1,x_2,\dots,x_n)$.\hspace{1mm}However algebra of non-commutative polynomials are examples of non-commutative associative algebra and we usually denote it by $\mathbb{K}\langle x_1,x_2,\dots,x_n\rangle$.
\end{example}

\subsection{Motivation}
Suppose we have a polynomial ring(either commutative or non-commutative) in one variable,\hspace{1mm}say $\mathbb{K}[x]$ where $\mathbb{K}$ is the ground field. Suppose we have an ideal $I$ of that ring $\mathbb{K}[x]$. Our job is to study the structure of $\mathbb{K}[x]/I$ in a constructive way. Later we will prove that the monomials not divisible by leading terms of the ideal $I$ form a basis of $\mathbb{K}[x_1,x_2,\dots,x_n]/I$. We will use such facts to get to know that given a polynomial $f \in \mathbb{K}[x]$,does it belong to the ideal $I$ or not? Now for single variable case it's easy as single variable polynomial ring $\mathbb{K}[x]$ is a Euclidean domain so we can perform Euclidean algorithm to know whether $f$ belongs to $I$ or not. But what for the case of multivariable polynomial ring $\mathbb{K}[x_1,x_2,\dots,x_n]$.\hspace{1mm}Suppose we have the same question: we have an ideal $I \subset \mathbb{K}[x_1,x_2,\dots,x_n]$ and we are given a polynomial $f \in \mathbb{K}[x_1,x_2,\dots,x_n]$ and are asked whether $f$ belongs to $I$ or not. Now here the case is difficult as multivariable polynomial ring  $\mathbb{K}[x_1,x_2,\dots,x_n]$ is neither a Euclidean domain nor a principal ideal domain.\\
\begin{lemma}
The polynomial ring  $\mathbb{K}[x_1,x_2,\dots,x_n]$ is not a principal ideal domain for $n >1$.
\end{lemma}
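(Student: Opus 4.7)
The plan is to exhibit a concrete ideal in $\mathbb{K}[x_1, x_2, \ldots, x_n]$ (with $n > 1$) that fails to be principal, and the most natural candidate is $I = (x_1, x_2)$. I would argue by contradiction: assume $I = (f)$ for some polynomial $f \in \mathbb{K}[x_1, \ldots, x_n]$, and derive an impossibility from the fact that $f$ would then have to divide both $x_1$ and $x_2$ simultaneously.

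First, I would record two easy observations about $I = (x_1, x_2)$: every element of $I$ has zero constant term (so $1 \notin I$, hence $I$ is a proper ideal), and both $x_1, x_2 \in I$. Next, assuming $I = (f)$, I would write $x_1 = f g_1$ and $x_2 = f g_2$ for some $g_1, g_2 \in \mathbb{K}[x_1, \ldots, x_n]$. Comparing total degrees in the first equation, and using that $\mathbb{K}[x_1, \ldots, x_n]$ is an integral domain so that degrees are additive, I get $\deg f + \deg g_1 = 1$, which forces $\deg f \in \{0, 1\}$.

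I would then split into cases. If $\deg f = 0$, then $f$ is a nonzero constant, hence a unit, so $I = \mathbb{K}[x_1, \ldots, x_n]$, contradicting the observation that $1 \notin I$. If $\deg f = 1$, then $g_1$ must be a nonzero constant, so $f$ is a scalar multiple of $x_1$, say $f = c x_1$ with $c \in \mathbb{K}^{\times}$. Substituting into $x_2 = f g_2$ gives $x_2 = c x_1 g_2$, and evaluating both sides at $x_1 = 0$ (a valid ring homomorphism $\mathbb{K}[x_1, \ldots, x_n] \to \mathbb{K}[x_2, \ldots, x_n]$) yields $x_2 = 0$, a contradiction.

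The argument is essentially routine; the only point worth being careful about is the use of degree additivity and the substitution homomorphism, both of which require only that $\mathbb{K}[x_1, \ldots, x_n]$ be an integral domain over a field. I do not anticipate any genuine obstacle, since the example $(x_1, x_2)$ is the canonical one and each step is elementary.
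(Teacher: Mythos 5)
Your proof is correct and follows essentially the same route as the paper: exhibit $(x_1,x_2)$, assume a single generator $f$, deduce that $f$ must be a unit, and contradict $1 \notin (x_1,x_2)$. You are in fact slightly more careful than the paper, since you explicitly rule out the case $\deg f = 1$ (e.g.\ $f = cx_1$ divides $x_1$ but not $x_2$), a step the paper's one-line ``so $f$ is a constant'' glosses over.
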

\begin{proof}
Take an ideal generated by $\{X_1,X_2\}$.\hspace{1mm}If $f$ generates this ideal, then $f$ divides both $X_1$ and $X_2$,\hspace{1mm}so $f$ is a constant term. So our ideal must be the entire ring. But $1$ is in the ring,\hspace{1mm}but not in the ideal. Contradiction.
\end{proof}
To solve this problem we have the concept of Gr\"{o}bner basis which is a type of basis defined carefully which tells that if we replace our generators $f_i$ of the ideal $I$ with a Gr\"{o}bner basis $g_j$ of the same ideal then we have the property that the remainder of $f$ on division by the polynomials $g_j$ is $0$ if and only if $f$ is in the ideal.\\
So we understand that to study the structure of $ \mathbb{K}[x_1,x_2,\dots,x_n]/I$ in a constructive way we require the concept of Gr\"{o}bner bases.The original definition was given in Bruno Buchberger's PhD thesis in 1965 [1]. Before moving to the definition of Gr\"{o}bner bases we require some preliminary materials:

\subsection{Preliminary materials}

\begin{theorem}[Hilbert basis theorem]
$I \subset \mathbb{K}[x_1,x_2,\dots,x_n]$ is always finitely generated,\hspace{1mm}so there exist $f_1,f_2,\dots,f_m \in \mathbb{K}[x_1,x_2,\dots,x_n]$ such that $I= \langle (f_1,f_2,\dots,f_m) \rangle$.\\
\end{theorem}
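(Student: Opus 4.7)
The plan is to prove the stronger statement that if every ideal of a ring $R$ is finitely generated, then the same holds for $R[x]$, and to iterate this, starting from $R = \mathbb{K}$ (where the only ideals are $(0)$ and $\mathbb{K}$). Writing $\mathbb{K}[x_1,\dots,x_n] = \mathbb{K}[x_1,\dots,x_{n-1}][x_n]$, an induction on $n$ then delivers the theorem.

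For the inductive step, let $I \subseteq R[x]$ be an ideal. The central device is the \emph{leading coefficient construction}: for each $d \ge 0$, let $J_d \subseteq R$ consist of $0$ together with the leading coefficients (with respect to $x$) of all polynomials in $I$ of degree exactly $d$. A short check using the ring operations in $I$ shows that each $J_d$ is an ideal of $R$, and the inclusion $J_d \subseteq J_{d+1}$ follows from the observation that if $f \in I$ has degree $d$ with leading coefficient $a$, then $xf \in I$ has degree $d+1$ with the same leading coefficient. Consequently $J := \bigcup_d J_d$ is an ideal of $R$, and by the inductive hypothesis it has a finite generating set.

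Each of the finitely many generators of $J$ lies in some $J_d$, so the ascending chain $J_0 \subseteq J_1 \subseteq \cdots$ must stabilise at some stage $N$. For each $0 \le d \le N$, the inductive hypothesis applied to $J_d$ provides a finite generating set of $J_d$, and I would lift each such generator to a polynomial $f_{d,i} \in I$ of degree exactly $d$ whose leading coefficient is the prescribed generator. The claim is then that the finitely many polynomials $\{f_{d,i}\}_{d \le N,\, i}$ generate $I$. This would be verified by induction on the degree of an arbitrary $f \in I$: its leading coefficient lies in $J_{\deg f}$ when $\deg f \le N$, and in $J_N$ when $\deg f > N$, and can therefore be written as an $R$-linear combination of leading coefficients of the chosen $f_{d,i}$; subtracting a suitable $R[x]$-combination of the $f_{d,i}$ (multiplied, if $\deg f > N$, by a power of $x$ chosen to match degrees) yields an element of $I$ of strictly smaller degree, to which the inductive hypothesis applies.

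The main obstacle is organising the two distinct uses of the finite-generation hypothesis cleanly — once to force the chain $\{J_d\}$ to stabilise at a finite stage $N$, and once to exhibit finite generators for each of the finitely many $J_d$ with $d \le N$ — and then checking carefully that the subtraction step really does strictly lower the degree. Everything else is bookkeeping over finitely many indices.
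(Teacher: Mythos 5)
The paper states the Hilbert basis theorem purely as background and gives no proof of its own, so there is nothing internal to compare against; your argument stands on its own and is correct. It is the classical leading-coefficient proof of the stronger statement that $R$ Noetherian implies $R[x]$ Noetherian, applied inductively to $\mathbb{K}[x_1,\dots,x_{n-1}][x_n]$: the ideals $J_d$ of leading coefficients of degree-$d$ elements of $I$ form an ascending chain whose union is finitely generated, hence stabilises at some $N$; lifting finite generating sets of $J_0,\dots,J_N$ to polynomials in $I$ and running a descending induction on $\deg f$ (subtracting a combination of the lifts, padded by $x^{\deg f - N}$ when $\deg f > N$, to kill the top coefficient) shows these finitely many lifts generate $I$. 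All the steps you flag as needing care do go through: $J_d$ is an ideal because a sum of two leading coefficients either remains a leading coefficient in degree $d$ or vanishes (and $0 \in J_d$ by convention), and the subtraction step strictly lowers the degree precisely because the matched polynomials have degree exactly $\deg f$ with the prescribed top coefficient.
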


\begin{definition}
An \textbf{admissible ordering}\hspace{1mm} $"<"$\hspace{1mm} of monomials is a total ordering of all monomials in $\mathbb{K}[x_1,x_2,\dots,x_n]$ such that
\begin{itemize}
\item it is a well ordering i.e. there is no infinite decreasing sequences.
\item $m_1 < m_2 \Rightarrow m_1m_3 < m_2m_3 $ for any monomial $m_3$,\hspace{1mm}where $m_1,m_2 \in \mathbb{K}[x_1,x_2,\dots,x_n]$.
\end{itemize}
\end{definition}

\begin{lemma}
There is only one admissible ordering of monomials  in $\mathbb{K}[x]$ i.e.\\
\[ x^k < x^l\hspace{2mm} \text{if and only if}\hspace{2mm} k < l\]
\end{lemma}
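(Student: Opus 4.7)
The plan is to pin down the ordering by first determining where the constant monomial $1 = x^0$ sits relative to $x$, and then using the multiplicative compatibility axiom to propagate this to all powers.

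First I would show that $1 < x$ in any admissible ordering. Since the ordering is total and $1 \neq x$, the only alternative is $x < 1$. I would rule this out using the well-ordering axiom: assuming $x < 1$, multiplying both sides by $x^k$ (allowed by the second axiom for admissible orderings) yields $x^{k+1} < x^k$ for every $k \geq 0$. This produces the infinite decreasing sequence
\[
 \cdots < x^3 < x^2 < x < 1,
\]
contradicting well-ordering. Hence $1 < x$.

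Next I would iterate the multiplicative compatibility axiom. Starting from $1 < x$, multiplying by $x^k$ gives $x^k < x^{k+1}$ for every $k \geq 0$. Transitivity of the total order then yields $x^k < x^l$ whenever $k < l$, which is the forward direction of the claimed equivalence. For the converse, suppose $x^k < x^l$; then $k = l$ is impossible (strictness) and $k > l$ is impossible (it would force $x^l < x^k$ by the forward direction, contradicting totality), so $k < l$.

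The only genuinely non-routine step is the first one, ruling out $x < 1$; this is exactly where the well-ordering hypothesis is used in an essential way, and everything else is a straightforward unfolding of the monoid structure on powers of $x$ together with the multiplicative compatibility of the ordering.
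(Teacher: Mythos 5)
Your proof is correct and follows essentially the same strategy as the paper: decide the position of $1$ relative to $x$, rule out $x<1$ by producing an infinite descending chain that violates well-ordering, and propagate $1<x$ to all powers via multiplicative compatibility. You are somewhat more complete than the paper's sketch, since you also make the transitivity step and the converse direction ($x^k<x^l\Rightarrow k<l$) explicit, but the underlying argument is the same.
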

\begin{proof}
Proof of this is easy. We will take an ordering like this,
\[ 1 < x\hspace{2mm}\text{implies}\hspace{2mm}x < x^2\hspace{2mm}\text{implies}\hspace{2mm}x^2 < x^3\hspace{2mm}\text{implies}\dots  \]
so we get an well ordering,\hspace{1mm}so this is the admissible ordering. Now suppose we take $x < 1\hspace{2mm}\text{implies}\hspace{2mm}x^2<x\hspace{2mm}\text{implies}\hspace{2mm}x^3 < x^2\hspace{2mm}\text{implies}\dots$,\hspace{1mm}then this implies an infinite decreasing sequence,\hspace{1mm}hence contradiction.
\end{proof}
\begin{remark}
For $n \geq 2$,\hspace{1mm} there are infinitely many admissible orderings.
\end{remark}
\begin{example}
\textbf{LEX}(lexicographic ordering),\hspace{1mm}it can be explained in this way,
\[ x_1^{i_1}x_2^{i_2}x_3^{i_3}\dots x_n^{i_n} < x_1^{j_1}x_2^{j_2}x_3^{j_3}\dots x_n^{j_n} \]
if
\[ i_1 < j_1 \hspace{1mm}\text{or} \hspace{1mm} \]
 \[  i_1=j_1 , i_2 < j_2 \hspace{1mm}\text{or} \hspace{1mm} \]
 \[  i_1=j_1 , i_2=j_2 , i_3 < j_3 \hspace{1mm} \text{or} \hspace{1mm}\]
  \[ \vdots \]
\end{example}
\begin{example}
\textbf{DEGLEX}(degree-lexicographic ordering),\\
a little difference with \textbf{LEX} is that here first we consider the degree then the \textbf{LEX} ordering. It can be explained as follows,
\[ x_1^{i_1}x_2^{i_2}x_3^{i_3}\dots x_n^{i_n} < x_1^{j_1}x_2^{j_2}x_3^{j_3}\dots x_n^{j_n}  \]
if
\[ i_1+i_2+i_3+\dots+i_n < j_1+j_2+j_3+\dots+j_n \hspace{1mm} \text{or}\]
\[ i_1+i_2+i_3+\dots+i_n = j_1+j_2+j_3+\dots+j_n \hspace{1mm} \text{and}\]
\[ x_1^{i_1}x_2^{i_2}x_3^{i_3}\dots x_n^{i_n}\hspace{1mm} <_\textbf{LEX}\hspace{1mm} x_1^{j_1}x_2^{j_2}x_3^{j_3}\dots x_n^{j_n}.\]
\end{example}
Let us fix an admissible ordering. Let $I \subset \mathbb{K}[x_1,x_2,\dots,x_n]$ be an ideal. From this $I$ we can find $\lt(I)$ (leading terms) or we can say $\lt(I)$\hspace{1mm}= space of linear combinations of monomials $m$ over $\mathbb{K}$ which are leading terms of elements of $I$. We say $m \in \lt(I)$ if $\exists \hspace{1mm}f \in I$ such that $f=cm + \sum c_im_i$,\hspace{1mm} where $m_i$'s are monomials with $m_i <m$ and $c_i \in \mathbb{K}$ and $c \neq 0$. However by $\lt(f)$ for $f \in I$ we will mean the leading term of the polynomial according to our fixed admissible ordering. For an example we fix an order $x > y$,\hspace{1mm}let our $f$ be $x^2+y^2$. Then $\lt(f)$ is $x^2$. Next by $\lc(f)$ we mean the co-efficient of leading term of $f$. We denote leading co-efficient by $\lc$.
\begin{lemma}
$\lt(I)$ is itself an ideal in $\mathbb{K}[x_1,x_2,\dots,x_n]$.
\end{lemma}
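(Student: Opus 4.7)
The plan is to use the definition of $\lt(I)$ as the $\mathbb{K}$-linear span of the monomials that appear as leading monomials of some element of $I$. Since $\lt(I)$ is defined as a span, it is automatically a $\mathbb{K}$-vector subspace of $\mathbb{K}[x_1,\ldots,x_n]$, so closure under addition and under scaling by elements of $\mathbb{K}$ is immediate. What remains is to verify closure under multiplication by an arbitrary polynomial; by linearity, it suffices to check closure under multiplication by a single monomial applied to a single generating leading monomial.

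Concretely, I would take a monomial $m$ that arises as $\lt(f)$ for some nonzero $f \in I$, so that $f = cm + \sum c_i m_i$ with $c \neq 0$ and $m_i < m$. Given any monomial $m_3$, consider $fm_3 \in I$ (which lies in $I$ because $I$ is an ideal). Expanding,
\begin{equation}
fm_3 = c\, m m_3 + \sum c_i\, m_i m_3.
\end{equation}
Here the second defining property of an admissible ordering, $m_i < m \Rightarrow m_i m_3 < m m_3$, is the crucial input: it forces $\lt(fm_3) = m m_3$. Hence $m m_3 \in \lt(I)$ by definition.

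To finish, I would extend this to an arbitrary element of $\lt(I)$ and an arbitrary polynomial by linearity: if $g = \sum_j a_j \mu_j \in \lt(I)$ with each $\mu_j$ a leading monomial of some element of $I$, and $h = \sum_k b_k \nu_k$ is any polynomial, then $gh = \sum_{j,k} a_j b_k (\mu_j \nu_k)$, and each $\mu_j \nu_k$ has been shown to lie in $\lt(I)$, which is closed under $\mathbb{K}$-linear combinations.

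The only subtle point, and therefore the step I would be most careful about, is the distinction between \emph{leading monomials of elements of $I$} (which need not form a vector space on their own) and \emph{arbitrary $\mathbb{K}$-linear combinations thereof} (which do, by fiat). All the work happens at the level of a single leading monomial, where admissibility of the ordering guarantees that multiplying by a monomial preserves the property of being the leading term; the passage to general elements is then purely formal.
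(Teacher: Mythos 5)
Your proof is correct and follows essentially the same route as the paper's: multiply a representative $f = cm + \sum c_i m_i$ of a leading monomial $m$ by a monomial and observe that the product stays in $I$ with leading term the product of the monomials. You are in fact slightly more careful than the paper, since you explicitly invoke the compatibility of the admissible ordering with multiplication to justify that $mm_3$ remains the leading term, and you spell out the purely formal extension by linearity; both points are left implicit in the paper's argument.
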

\begin{proof}
It is very easy to show. Let we take our previous $f$ i.e. $f=cm + \sum c_im_i$,\hspace{1mm}we form $m' = m''m$,\hspace{1mm}multiplying $m''$ with the equation of $f$ we get $fm'' = cm''m + \sum c_im''m_i$,\hspace{1mm}as $I$ is an ideal so $fm'' \in I$ implies $m' \in \lt(I)$.
\end{proof}

\begin{lemma}
Cosets of monomials $m \notin \lt(I)$ form a basis in $R = \mathbb{K}[x_1,x_2,\dots,x_n]/I$.
\end{lemma}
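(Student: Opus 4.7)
The plan is to establish the two defining properties of a basis separately: that the cosets $\{\bar m : m \notin \lt(I)\}$ span $R$, and that they are linearly independent.

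For the spanning part, I would argue by well-founded induction on the admissible ordering. Given any $f \in \mathbb{K}[x_1,\ldots,x_n]$, if every monomial appearing in $f$ lies outside $\lt(I)$, we are done. Otherwise, let $m$ be the largest (with respect to the admissible order) monomial in $f$ that lies in $\lt(I)$, with coefficient $c \in \mathbb{K}$. By the previous lemma, $\lt(I)$ is an ideal, and by definition $m \in \lt(I)$ means there exists $g \in I$ with $\lt(g) = m$ (after scaling we may assume $\lc(g) = 1$). Then $f - cg$ represents the same coset in $R$ as $f$, but the coefficient of $m$ has been killed, and every new monomial introduced is strictly smaller than $m$ in the ordering. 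Applying this reduction repeatedly, the well-ordering hypothesis guarantees termination, and the final remainder is a $\mathbb{K}$-linear combination of monomials not in $\lt(I)$ whose coset equals $\bar f$.

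For linear independence, I would argue by contradiction. Suppose there is a finite nontrivial relation $\sum_i c_i \bar{m}_i = 0$ in $R$ with each $m_i \notin \lt(I)$ and $c_i \in \mathbb{K}$ not all zero. This means the polynomial $h = \sum_i c_i m_i$ lies in $I$. Since the $m_i$ are distinct monomials and the ordering is a total order, $h$ has a well-defined leading term, which is $c_j m_j$ for the unique index $j$ such that $m_j$ is the largest among those $m_i$ with $c_i \neq 0$. But then $m_j = \lt(h) \in \lt(I)$, contradicting the assumption $m_j \notin \lt(I)$.

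The main conceptual obstacle is ensuring that the reduction procedure in the spanning step actually terminates; this is precisely why the admissible ordering is required to be a well-ordering, and once that is invoked, the argument runs cleanly. The linear independence part is essentially immediate from the definition of $\lt(I)$ together with the fact that a total order on monomials makes the notion of leading term unambiguous.
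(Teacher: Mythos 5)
Your proof is correct and follows essentially the same route as the paper: linear independence via the observation that a nontrivial relation would be a nonzero element of $I$ whose leading term lies in $\lt(I)$, and spanning via descent along the admissible well-ordering. The only cosmetic difference is that you package the spanning step as an explicit reduction algorithm on arbitrary polynomials (terminating because the largest offending monomial strictly decreases), whereas the paper runs a minimal-counterexample argument on the monomials of $\lt(I)$ themselves; both rest on the same well-ordering property.
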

\begin{proof}
Let us first prove the linear independence.\\
Let $m_1,m_2,\dots,m_l \notin \lt(I)$,\hspace{1mm}without loss of generality assume $m_1 < m_2 < \dots < m_l$ \hspace{1mm}(where $<$ is our fixed admissible ordering),\hspace{1mm}then we have $c_1m_1 + c_2m_2 +\dots + c_lm_l = 0$ \hspace{1mm} in $R$,\hspace{1mm}where $c_i 's \in \mathbb{K}$. Let $f = c_1m_1 + c_2m_2 +\dots + c_lm_l \in I $. Then $\lt(f) \in \lt(I)$,\hspace{1mm} a contradiction unless $f = 0$ implies $c_1 = c_2 = \dots = c_l=0 $.\\
Next job is to show the spanning set property i.e. we need to show that if $m \in \lt(I)$,\hspace{1mm} then $m$ is a linear combination in $R$ of cosets of monomials not present in $\lt(I)$. We will prove this with the help of contradiction. Let's take the smallest $m \in \lt(I)$ for which such a combination doesn't exist. Now by definition, $\exists f \in I$,\hspace{1mm} such that
$0 = f = cm + \sum c_im_i$,\hspace{1mm}with $m_i < m$,\hspace{1mm}and each of $m_i's$ is not in $\lt(I)$. Then we have $m = -\sum \frac{c_i}{c}m_i$ which can be represented as a combination of cosets of elements outside $\lt(I)$. Contradiction.
\end{proof}
\section{ Gr\"{o}bner bases and Diamond lemma}
\subsection{Gr\"{o}bner basis}
\begin{definition}
$G \subset I$ is called a \textbf{Gr\"{o}bner basis} of $I$ if $\{\lt(g) |\hspace{1mm}g \in G \}$ generate the ideal $\lt(I)$ i.e. for each $f \in I$,\hspace{1mm}$\lt(f)$ is divisible by $\lt(g)$ for some $g \in G$.
\end{definition}
\begin{lemma}
$\langle G \rangle = I$.
\end{lemma}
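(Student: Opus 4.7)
The containment $\langle G\rangle \subseteq I$ is immediate since $G\subset I$ and $I$ is an ideal, so the entire content of the lemma lies in the reverse containment $I\subseteq \langle G\rangle$. My plan is to prove this by a standard descent argument using the well-ordering property of the admissible ordering, very much in the spirit of the proof of the previous lemma on cosets of monomials outside $\lt(I)$.

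Suppose, for contradiction, that $I\setminus\langle G\rangle$ is nonempty. Since the ordering on monomials is a well ordering, the set $\{\lt(f) : f\in I\setminus\langle G\rangle\}$ has a minimum element; pick $f\in I\setminus\langle G\rangle$ realizing it. Because $f\in I$, we have $\lt(f)\in\lt(I)$, and by the defining property of a Gröbner basis there exists $g\in G$ such that $\lt(g)$ divides $\lt(f)$. Write $\lt(f) = m\cdot\lt(g)$ for some monomial $m$, and set
\[
f' \;=\; f \;-\; \frac{\lc(f)}{\lc(g)}\, m\, g.
\]
By construction the leading terms cancel, so either $f'=0$ or $\lt(f') < \lt(f)$.

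Now observe that $f'\in I$ because both $f$ and $mg$ lie in $I$, and $\frac{\lc(f)}{\lc(g)} mg \in\langle G\rangle$. If $f'=0$ then $f = \frac{\lc(f)}{\lc(g)} mg \in\langle G\rangle$, contradicting the choice of $f$. Otherwise $f'\in I$ with $\lt(f')<\lt(f)$, and if $f'\in\langle G\rangle$ we would again conclude $f\in\langle G\rangle$, a contradiction; hence $f'\in I\setminus\langle G\rangle$ with strictly smaller leading term than $f$, contradicting the minimality of $\lt(f)$.

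The only step that needs slight care is ensuring that the witness $g\in G$ provided by the Gröbner basis property actually divides $\lt(f)$ as a monomial (not merely as an element of the ideal $\lt(I)$), but this is exactly what the definition states: $\{\lt(g)\}$ generates $\lt(I)$ as an ideal, so every monomial in $\lt(I)$ is a monomial multiple of some $\lt(g)$. No genuine obstacle arises; the argument rests entirely on the well-ordering of monomials, which licenses the descent.
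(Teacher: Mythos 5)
Your proof is correct and follows essentially the same route as the paper: a minimal-counterexample descent on leading terms, subtracting $\frac{\lc(f)}{\lc(g)}mg$ to reduce the leading term and derive a contradiction. You handle the $f'=0$ case and the monomial-divisibility point a bit more carefully than the paper does, but the argument is the same.
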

\begin{proof}
We know that $(G) \subset I$, suppose assume that $(G) \neq I$. Let $f \in I\backslash(G)$ with smallest possible leading term. Then $\lt(f)= m \lt(g)$ for some $g \in G$,\hspace{1mm} $m \in I$. Let $F = f - \frac{\lc(f)}{\lc(g)}mg$,\hspace{1mm}where $\lc$ is the leading coefficient of leading term, then we have $\lt(F) < \lt(f)$,\hspace{1mm}it implies $F \in I \Rightarrow F \in (G)$,\hspace{1mm}then $f = F + \frac{\lc(f)}{\lc(g)}mg \in (G)$,\hspace{1mm}which is a contradiction.
\end{proof}
\begin{remark}
We have already proved that monomials not divisible by $\lt(G)$ form a basis of  $\mathbb{K}[x_1,x_2,\dots,x_n]/I$.
\end{remark}

We are now going to define \textit{reduction} and \textit{S-polynomial} for the commutative case, these two definitions play an important role for computation of Gr\"{o}bner bases.\hspace{1mm}For all these definitions we have $\mathbb{K}(x_1,x_2,\dots,x_n)$ as the commutative ring.
\begin{definition}
Suppose $f_1$,$f_2$ are two polynomials belong to our defined ring,\hspace{1mm}such that there exists a monomial $m$ with
\[ \lt(f_1) = m \lt(f_2).\]
Then
\begin{equation}
R_{f_2}(f_1) = f_1 - \frac{\lc(f_1)}{\lc(f_2)}mf_2
\end{equation}
is called \textbf{reduction} of $f_1$ with respect to $f_2$.
\end{definition}
\begin{definition}
We have two polynomials $f_1$ and $f_2$ in the ring,\hspace{1mm}suppose there exist monomials $m_1$,$m_2$ such that\\
\begin{equation} m_1\lt(f_2) = m_2\lt(f_1) \hspace{2mm}\text{and}\hspace{2mm} \deg(m_1) < \deg \lt(f_1) 
\end{equation}
then
\begin{equation}
 S(f_1,f_2) = \frac{1}{\lc(f_1)}m_2f_1 - \frac{1}{\lc(f_2)}m_1f_2
\end{equation}
is called \textbf{S-polynomial} with respect to a small common multiple (4).
\end{definition}
\begin{example}
Before giving the example, we would like to point out that throughout this paper $a \cdot b = ab$ for any $a,b$ belonging to either field,algebras etc. Sometime for our better understanding and to deal with some scenarios we use the multiplication symbol '$\cdot$'. \\
Let us give an example to show how reduction and S-polynomial work, suppose we have a commutative polynomial ring in 2 variable i.e. $\mathbb{K}(x,y)$.\hspace{1mm}We will pick \textbf{DEGLEX} ordering. Now let $f_1 = x^3 - y^2$ and $f_2 = x^3 -x +1$.\hspace{1mm}Then one can see for both $f_1$ and $f_2$, $\lt$ is $x^3$. So $\lt(f_1) = \lt(f_2) = x^3$ implies $m = 1$ so that $x^3 = 1\cdot x^3$,\hspace{1mm}our reduction of $f_1$ w.r.t $f_2$ will be then $x^3-y^2-(x^3-x+1) = x-y^2-1$.\\
And while computing S-polynomial with respect to a small common multiple we have 3 choices,
\[ 1 \cdot x^3 = 1 \cdot x^3\]
\[ x \cdot x^3 = x \cdot x^3 \]
\[ x^2 \cdot x^3 = x^2 \cdot x^3\]
hence for each cases we can compute the S-polynomial using our formula (3). For an example if we consider $x \cdot x^3 = x \cdot x^3  $ then our S-polynomial will be $x^2(x^3-y^2)-x^2(x^3-x+1) = x^3 - x^2y^2- x^2$.
\end{example}
\begin{remark}
In the commutative case S-polynomials for different common multiples are easily related, but in a non-commutative case, they all carry important information. We will see this when we will compute the Gr\"{o}bner basis for non-commutative case.
\end{remark}
\subsection{Diamond lemma}
\begin{lemma}
\textbf{Diamond lemma}:\\
$G \subset I$ forms a Gr\"{o}bner basis if and only if for each $g_1,g_2 \in G$\\
$R_{g_2}(g_1)$ (if defined) can be reduced to $0$ modulo $G$.\\
And also for each $g_1,g_2 \in G$ and each small common multiple of\hspace{1mm} $\lt(g_1),\lt(g_2)$;\hspace{1mm}the corresponding S-polynomial can be reduced to $0$ modulo $G$.
\end{lemma}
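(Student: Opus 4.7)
The plan is to handle the two directions separately. For the forward implication, I would observe that both $R_{g_2}(g_1)$ and $S(g_1, g_2)$ lie in $I$, being explicit $\mathbb{K}[x_1,\ldots,x_n]$-linear combinations of $g_1, g_2 \in G \subset I$. If $G$ is a Gr\"{o}bner basis, then any nonzero element of $I$ has leading term divisible by $\lt(g)$ for some $g \in G$, so a further reduction step is always available. Iterating reductions on $R_{g_2}(g_1)$ or on $S(g_1, g_2)$, the leading monomial strictly decreases; since the admissible ordering is a well-ordering, the process terminates. The terminal polynomial lies in $I$ but admits no further reduction, hence must be $0$.

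For the converse, I would assume (as is customary for the Buchberger criterion) that $G$ generates $I$, and pick $f \in I$. Write $f = \sum_{i=1}^{k} h_i g_i$ with $h_i \in \mathbb{K}[x_1,\ldots,x_n]$ and $g_i \in G$, and set $M = \max_i \lt(h_i)\lt(g_i)$. By well-ordering one can choose a representation of $f$ minimizing $M$. Clearly $\lt(f) \leq M$. If equality holds, then some summand satisfies $\lt(h_i)\lt(g_i) = \lt(f)$, and we immediately obtain the desired divisibility $\lt(g_i) \mid \lt(f)$.

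The hard part is ruling out $M > \lt(f)$. In that case the top-degree-$M$ contributions of the summands must cancel: writing $S_M = \{i : \lt(h_i)\lt(g_i) = M\}$, one has $\sum_{i \in S_M} \lc(h_i)\lc(g_i) = 0$. I would then apply an Abel-style summation-by-parts to $\sum_{i \in S_M} \lc(h_i)\, \tilde{m}_i\, g_i$, where $\tilde{m}_i$ denotes the leading monomial of $h_i$, rewriting it as a $\mathbb{K}$-linear combination of pairwise differences of the form $\tfrac{1}{\lc(g_i)} \tilde{m}_i g_i - \tfrac{1}{\lc(g_j)} \tilde{m}_j g_j$. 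Each such difference is, up to a common monomial factor, either an S-polynomial $S(g_i, g_j)$ at some small common multiple or, in the degenerate case when $\lt(g_j) \mid \lt(g_i)$, the reduction $R_{g_j}(g_i)$. By hypothesis each of these admits a standard representation $\sum_k h_k'' g_k$ with every $\lt(h_k'' g_k)$ strictly below the leading term of the S-polynomial or reduction itself. Substituting these expressions back into the original representation of $f$ yields a new expression whose $M$-value is strictly smaller, contradicting minimality. The delicate technical step is the bookkeeping: verifying that only small common multiples appear in the summation-by-parts decomposition, and that after substitution every new summand $\tilde{h}_i g_i$ satisfies $\lt(\tilde{h}_i g_i) < M$.
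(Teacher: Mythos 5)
Your overall architecture is sound and both halves land on the standard Buchberger-criterion argument, but the converse is organized differently from the paper. The paper does not prove the Diamond Lemma directly: it proves the equivalent three-way Lemma (1 $\Rightarrow$ 2 $\Rightarrow$ 3 $\Rightarrow$ 1), where your ``minimal $M$'' representation is its statement (3). Your forward direction coincides with the paper's $1\Rightarrow 2$ (elements of $I$ always admit a further reduction, leading terms strictly decrease, well-ordering forces termination at $0$). For the converse, you use the Abel/summation-by-parts device: the top-degree coefficients cancel, so the degree-$M$ part rewrites as a $\mathbb{K}$-combination of pairwise differences $\frac{1}{\lc(g_i)}\tilde m_i g_i - \frac{1}{\lc(g_j)}\tilde m_j g_j$, each a monomial multiple of an S-polynomial or reduction, and minimality of $M$ gives the contradiction. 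The paper instead inducts on $k$, the number of summands attaining the maximum, and handles one cancelling pair at a time via an explicit three-case analysis (one leading term divides the other; the leading terms have a small common multiple; the leading terms are coprime). Your route is the cleaner, more standard one (it is essentially the Cox--Little--O'Shea proof); the paper's is more hands-on but makes the case distinctions visible. You also correctly add the hypothesis $\langle G\rangle = I$ for the converse, which the bare statement of the lemma omits but which the paper's equivalent lemma assumes explicitly.

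The one point you flag as ``delicate bookkeeping'' is in fact a genuine case that needs its own argument with this paper's conventions. The S-polynomial here is only defined for a \emph{small} common multiple, i.e.\ one with $\deg(m_1) < \deg \lt(f_1)$; when $\lt(g_i)$ and $\lt(g_j)$ are coprime, the least common multiple is $\lt(g_i)\lt(g_j)$ and this inequality fails, so no S-polynomial exists for that pair and the hypothesis of the lemma says nothing about it. Your pairwise difference for such a pair is therefore neither a reduction nor a multiple of an S-polynomial, and you cannot substitute a standard representation for it. The paper's case (3) resolves exactly this: writing $\lt(h_1)=\lt(g_2)e$, $\lt(h_2)=\lt(g_1)e$ and substituting $\lt(g_2)=\frac{1}{\lc(g_2)}(g_2-\bar g_2)$ shows directly that the offending pair can be rewritten with strictly smaller leading terms, with no appeal to the hypothesis at all (this is Buchberger's first criterion in disguise). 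Your proof needs either this computation or an equivalent observation that coprime pairs cancel automatically; once that is added, the argument is complete.
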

We will write another lemma which is equivalent to the \textit{diamond lemma}. We will prove this lemma only as it's easy to prove:
\begin{lemma}
Assume $\langle G \rangle = I$,\hspace{1mm}then the following statements are equivalent:\\
1. $G$ is a Gr\"{o}bner basis of $I$.\\
2. All reductions and all S-polynomials of pair of elements of $G$ can be reduced to $0$ modulo $G$.\\
3. For every $f \in I$,\hspace{1mm}$f$ admits a representation\\
\[ f=h_1g_1 + h_2g_2+\dots+h_ng_n \hspace{1mm};\hspace{1mm} g_i \in G\]
with \[ \lt(f) = \max(\lt(h_ig_i))\]
\end{lemma}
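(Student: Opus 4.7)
The plan is to prove the cyclic chain $(1) \Rightarrow (3) \Rightarrow (2) \Rightarrow (1)$, which gives the full equivalence.

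For $(1) \Rightarrow (3)$, I would induct on the well-ordering. Given $f \in I$, the Gr\"{o}bner basis property supplies $g \in G$ and a monomial $m$ with $\lt(f) = m \cdot \lt(g)$ up to a scalar. Then $f' = f - \frac{\lc(f)}{\lc(g)}\, m g$ lies in $I$ with $\lt(f') < \lt(f)$, so by the inductive hypothesis it admits a representation $\sum h_i' g_i$ whose maximum summand leading monomial equals $\lt(f')$. Adjoining the new summand $\frac{\lc(f)}{\lc(g)} m g$, whose leading monomial is $\lt(f)$ and hence strictly dominates everything from $f'$, yields the required representation of $f$.

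For $(3) \Rightarrow (2)$, every reduction $R_{g_2}(g_1)$ and every S-polynomial $S(g_1,g_2)$ already belongs to $I$. Applying (3) to such an element $p$ gives a representation whose maximum summand leading monomial equals $\lt(p)$; in particular $\lt(p)$ is divisible by some $\lt(g_j)$, so one reduction by $g_j$ strictly decreases the leading monomial of $p$. The well-ordering then forces the iterated reduction process to terminate at $0$.

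The heart of the proof is $(2) \Rightarrow (1)$, which is essentially Buchberger's criterion. Given $f \in I$, use $\langle G \rangle = I$ to write $f = \sum h_i g_i$ and choose such a representation minimising $M := \max_i \lt(h_i g_i)$; such a minimum exists by the well-ordering. If $M = \lt(f)$ then $\lt(f)$ is divisible by some $\lt(g_i)$ and we are done. Otherwise $M > \lt(f)$, so the leading contributions must cancel: $\sum_{i \in S} \lc(h_i)\lc(g_i) = 0$ where $S = \{\,i : \lt(h_i g_i) = M\,\}$. The idea is to rewrite the cancelling block $\sum_{i \in S} \lc(h_i)\lt(h_i)\, g_i$ as a $\mathbb{K}$-linear combination of multiples of S-polynomials $S(g_i,g_j)$ (one per pair in $S$, with the common multiple dividing $M$), up to summands of leading monomial strictly below $M$. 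Each such S-polynomial reduces to $0$ modulo $G$ by hypothesis (2), and $(1)\Rightarrow(3)$ applied within the proof, or a direct unwinding of the reduction sequence, turns this into an explicit representation whose summands have leading monomial strictly below $M$ after multiplication by the relevant cofactor. Substituting produces a new representation of $f$ with a smaller value of $M$, contradicting minimality.

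The main obstacle is precisely the telescoping rewrite of the cancelling block in the last implication: identifying the correct $\mathbb{K}$-coefficients and S-polynomial cofactors, and then checking that the leading-monomial estimate survives the substitution, is the nontrivial combinatorial core of the argument. The other two implications are essentially bookkeeping exercises powered by the well-ordering of monomials.
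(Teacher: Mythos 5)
Your architecture is sound and, up to reversing the cyclic order, it is essentially the paper's: the paper proves $(1)\Rightarrow(2)\Rightarrow(3)\Rightarrow(1)$ with $(2)\Rightarrow(3)$ as the hard step, whereas you prove $(1)\Rightarrow(3)\Rightarrow(2)\Rightarrow(1)$ and place the Buchberger-type cancellation argument in $(2)\Rightarrow(1)$. The content of the hard step is the same (resolve a cancellation at the top monomial $M$ using reductions and S-polynomials, strictly decrease $M$, invoke the well-ordering); you organize it by minimizing $M$ over all representations, the paper by induction on the number $k$ of summands attaining the maximum. Your $(1)\Rightarrow(3)$ and $(3)\Rightarrow(2)$ are correct. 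One caution: inside $(2)\Rightarrow(1)$ you may not invoke ``$(1)\Rightarrow(3)$'', since $(1)$ is precisely what is being proved; fortunately the alternative you offer --- unwinding the reduction sequence of an S-polynomial $p$ to obtain a representation $p=\sum\tilde h_jg_j$ with $\max\lt(\tilde h_jg_j)=\lt(p)$ --- is the correct, non-circular route, so keep only that.

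The one genuine gap is in the telescoping rewrite of the cancelling block. With the paper's definitions, an S-polynomial exists only for a \emph{small} common multiple, i.e.\ when $\lt(g_i)$ and $\lt(g_j)$ genuinely overlap; if they are coprime there is no small common multiple, hence no S-polynomial, hence hypothesis $(2)$ says nothing about that pair, and your plan of ``one S-polynomial per pair in $S$'' fails exactly there. The paper isolates this as its case (3): substitute $\lt(g_2)=\lc(g_2)^{-1}(g_2-\bar g_2)$ into the offending term $\lc(h_1)\lt(g_2)e\,g_1$, which re-expresses it as a multiple of $g_1$ by $g_2$ plus terms of strictly smaller leading monomial (Buchberger's first criterion in disguise). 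Likewise, for a pair with $\lt(g_2)\mid\lt(g_1)$ the relevant object supplied by $(2)$ is the reduction $R_{g_2}(g_1)$ rather than an S-polynomial, which is the paper's case (1). So your single telescoping step conceals a three-way case split (divisibility, proper overlap, coprimality), and the coprime branch requires an argument not furnished by hypothesis $(2)$; you must either add it explicitly or redefine S-polynomials for arbitrary common multiples and verify that the coprime ones reduce to $0$ automatically. The telescoping identity itself, which you flag but leave unproved, is exactly the content of the paper's cases (1) and (2).
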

\begin{definition}
$f$ can be reduced to $0$ modulo $G$,\hspace{1mm}if there exists \hspace{1mm} $g_1,g_2,\dots,g_n \in G$ such that
\[ R_{g_m}(\dots\dots R_{g_2}(R_{g_1}(f))\dots) = 0\]
\end{definition}
\begin{proof}
We need to show that
\[ 1 \hspace{2mm}\text{implies}\hspace{2mm}2 \hspace{2mm}\text{implies}\hspace{2mm}3\hspace{2mm}\text{implies}\hspace{2mm} 1 \]
Now 3 implies 1 is trivial from the definition of Gr\"{o}bner bases. We will at first prove 1 implies 2.\\
Suppose $G$ is a Gr\"{o}bner basis,\\
then every $f \in I$ can be reduced to $0$ modulo $G$,\\
( $f \in I \Rightarrow \lt(f) = m\lt(g)$ for some $g \in G$,\hspace{1mm}then $R_g(f)$ has smaller leading term and we proceed with that and proceed further until get $0$.\\
Similarly for $f \in I$ we have $m_1\lt(f)=m_2\lt(g)$ for some $g \in G$,\hspace{1mm}with $\deg(m_1) < \deg \lt(g)$,\hspace{1mm}so we have $S(f,g)$ with smaller leading term and proceed like this until get $0$.)\\ 
This is exactly what statement (2) says.\\
Next we will show 2 implies 3.\\
As $I = (G)$,\hspace{1mm}lets take $f \in I$,
\[ f=h_1g_1 + h_2g_2 + \dots + h_ng_n; \hspace{2mm}\text{with}\hspace{1mm} \lt(f) < \max(\lt(h_ig_i)). \]
Our main objective is to show how to replace this combination by another one with smaller $\max(\lt(h_ig_i))$,\hspace{1mm}if still bigger than $\lt(f)$,\hspace{1mm}continue until it becomes $\lt(f)$. Without loss of generality,
\[ \lt(h_1g_1) = \lt(h_2g_2) = \dots  = \lt(h_kg_k) = \max \lt(h_ig_i)  \]
\[ and \hspace{1mm}\lt(h_pg_p) < \max \lt(h_ig_i)\hspace{2mm}\text{for} \hspace{1mm} p > k.  \]
We will use induction on $k$ to prove our desired result.\\
if $k = 1$,\hspace{1mm}a contradiction as we can't cancel,so $k \geq 2$.\hspace{1mm}Now
\[ \lt(h_1g_1) = \lt(h_1)\lt(g_1)   \]
\[ \lt(h_2g_2) = \lt(h_2)\lt(g_2).  \]
Without loss of generality assume these 3 conditions:\\
1. $\lt(g_1)$ is divisible by $\lt(g_2)$.\\
2. $\lt(g_1),\lt(g_2)$ have a small common multiple.\\
3. $\lt(g_1),\lt(g_2)$ have no common divisors.\\
Let us first deal with (1),
\[ \lt(g_1) = m\lt(g_2)  \]
\[ \text{then} \hspace{2mm} R_{g_2}(g_1) = g_1 - \frac{\lc(g_1)}{\lc(g_2)}mg_2  \]
\[ \text{now} \hspace{2mm}h_1g_1 + h_2g_2 = h_1\left( R_{g_2}(g_1) + \frac{\lc(g_1)}{\lc(g_2)}mg_2\right) + h_2g_2 \]
 \[                       = h_1R_{g_2}(g_1) + \left(\frac{\lc(g_1)}{\lc(g_2)}mh_1 + h_2\right)g_2  \]
 \[ \text{with} \hspace{3mm} \lt(R_{g_2}(g_1) < \lt(g_1)  \]
 \[ \text{so} \hspace{2mm} h_1R_{g_2}(g_1) = \sum_{i}\tilde{h_i}g_ih_1  \]
 \[ \lt(g_1) > \lt(R_{g_2}(g_1)) = \max ( \lt(\tilde{h_i}g_i)) \]
 So (1) replace the combination by another one with smaller $k$.\hspace{1mm}Lets move to case (2),
 \[ \lt(h_1)\lt(g_1) = \lt(h_2)\lt(g_2)  \]
 suppose that,
 \[ \lt(g_1) = m_1d  \]
 \[ \lt(g_2) = m_2d  \]
 where $m_1,m_2$ have no common factors,
 \[ \lt(h_1)m_1d = \lt(h_2)m_2d    \]
 canceling $d$ from both side we get,
 \[ \lt(h_1)=em_2  \]
 \[ \lt(h_2)=em_1,\hspace{3mm}\text{ for some}\hspace{1mm} e,  \]
 \[ h_1g_1 + h_2g_2 = \lc(h_1)\lt(h_1)g_1 + ( h_1 - \lc(h_1)\lt(h_1))g_1   \]
 \[ + \lc(h_2)\lt(h_2)g_2 +( h_2 - \lc(h_2)\lt(h_2))g_2  \]
 as\hspace{1mm} $( h_1 - \lc(h_1)\lt(h_1))g_1 +( h_2 - \lc(h_2)\lt(h_2))g_2$ \hspace{1mm} have smaller leading term so we will not consider this in our account and proceed with the remaining terms,\hspace{1mm}i.e. we deal with
 \begin{equation} \lc(h_1)\lt(h_1)g_1 + \lc(h_2)\lt(h_2)g_2 . \end{equation}
 We have
 \[ m_2\lt(g_1) = m_1\lt(g_2)  \]
 so the S-polynomial w.r.t small common multiple,
 \[ S = \frac{1}{\lc(g_1)}m_2g_1 - \frac{1}{\lc(g_2)}m_1g_2   \]
 Putting the values of $\lt(h_1)$ and $\lt(h_2)$ in eq (6) we get,
 \[ \lc(h_1)em_2g_1 + \lc(h_2)em_1g_2   \]
 \[ = \lc(h_1)\left(\lc(g_1)S + \frac{\lc(g_1)}{\lc(g_2)}m_1g_2 \right)e + \lc(h_2)em_1g_2 \]
 \[ = \lc(h_1)\lc(g_1)S + \left( \lc(h_1)\frac{\lc(g_1)}{\lc(g_2)}m_1e + \lc(h_2)m_1e \right)g_2  \]
 So we have again replaced the combination with a smaller $k$.\\
 Finally we have case (3) in hand,
 \[ \lt(h_1)\lt(g_1) = \lt(h_2)\lt(g_2)  \]
 \[ \Rightarrow \lt(h_1) = \lt(g_2)e   \]
 \[ \text{and} \hspace{3mm} \lt(h_2) = \lt(g_1)e, \hspace{2mm}\text{for some}\hspace{1mm}e \]
 Now
 \[ h_1g_1 + h_2g_2  \]
 \[ = \lc(h_1)\lt(h_1)g_1 + \lc(h_2)\lt(h_2)g_2 + \hspace{2mm}\text{lower terms}. \]
 Then proceed with 
 \[ \lc(h_1)\lt(h_1)g_1 + \lc(h_2)\lt(h_2)g_2   \]
 replacing the values of $\lt(h_1)$ and $\lt(h_2)$,
 \begin{equation}
 = \lc(h_1)\lt(g_2)eg_1 + \lc(h_2)\lt(g_1)eg_2.
 \end{equation}
 We have 
 \[ \lt(g_2) = \frac{1}{\lc(g_2)}(g_2 - \bar{g_2}),   \]
 replacing this in eq (7) we get
 \[ \frac{\lc(h_1)}{\lc(g_2)}(g_2 - \bar{g_2})eg_1 + \lc(h_2)\lt(g_1)eg_2   \]
 which again makes either $\lt$ on $k$ smaller.\\
 This is how we proceed using induction and prove statement (3) from (2).
\end{proof}
\section{Non-commutative Gr\"{o}bner bases}
We consider the non-commutative polynomial ring $\mathbb{K}\langle x_1,x_2,\dots,x_n \rangle$.
\begin{definition}
We have two polynomials $f,g$ belong to the above ring,with there exists monomials $m_1,m_2$ such that,
\[  \lt(f) = m_1\lt(g)m_2 \]
then
\begin{equation} R_g(f) = f - \frac{\lc(f)}{\lc(g)}m_1gm_2   \end{equation}
is called \textbf{reduction} of $f$ with respect to $g$.
\end{definition}
\begin{definition}
For $f$,$g$ of the ring and for any small common multiple of $\lt(f),\lt(g)$;\hspace{1mm}there exists two monomials $m_1,m_2$ with
\[   \lt(f)m_2 = m_1\lt(g) \hspace{2mm}with \hspace{1mm} \deg m_1 < \deg \lt(f) \]
then
\begin{equation}
S(f,g) = \frac{1}{\lc(f)}fm_2 - \frac{1}{\lc(g)}m_1g
\end{equation}
is called the \textbf{S-polynomial} with respect to small common multiples.
\end{definition}
\begin{remark}
$ I \subset \mathbb{K}\langle x_1,x_2,\dots,x_n \rangle$ is a two-sided ideal. Then $\lt(I)$ which is linear span of $\lt(f)$ with $f \in I$ is also a two-sided ideal.
\end{remark}

\begin{definition}
$ G \subset I$ is a \textbf{Gr\"{o}bner basis} of $I$ if for every $f \in I$,\hspace{1mm}we have $\lt(f) = m_1\lt(g)m_2$ for some $g \in G$.
\end{definition}
\begin{lemma}[Diamond lemma for non-commutative case]
$G \in I$ forms a Gr\"{o}bner basis of $I$ if and only if for each $g_1,g_2 \in G$;\hspace{1mm}$R_{g_2}(g_1)$(if defined) can be reduced to $0$ modulo $G$. Also for each $g_1,g_2 \in g$ and each small common multiple of $\lt(g_1),\lt(g_2)$;\hspace{1mm}the corresponding S-polynomial can be reduced to $0$ modulo $G$.
\end{lemma}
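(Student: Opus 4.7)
The plan is to mirror the strategy used for the commutative Diamond lemma: replace the stated biconditional by an equivalence of three conditions, namely (1) $G$ is a Gr\"obner basis of $I$; (2) every reduction $R_{g_2}(g_1)$ and every S-polynomial $S(g_1,g_2)$ with $g_1,g_2\in G$ reduces to $0$ modulo $G$; and (3) every $f\in I$ admits a representation
\[ f = \sum_i h_{i,1}\, g_i\, h_{i,2}, \qquad g_i\in G, \]
with the property that $\lt(f) = \max_i \lt(h_{i,1}\, g_i\, h_{i,2})$. As before, (3) $\Rightarrow$ (1) is immediate from the definition of a non-commutative Gr\"obner basis, since the maximum on the right equals $\lt(h_{j,1})\lt(g_j)\lt(h_{j,2})$ for the index $j$ realising the maximum, so $\lt(f)$ is a two-sided monomial multiple of some $\lt(g_j)$.

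For (1) $\Rightarrow$ (2), I would argue exactly as in the commutative proof. Any reduction $R_{g_2}(g_1)$ and any S-polynomial $S(g_1,g_2)$ lies in $I$ and has strictly smaller leading term than $\lt(g_1)$; by hypothesis its leading term is again of the form $m_1 \lt(g) m_2$ for some $g\in G$, so a further reduction strictly decreases the leading monomial. Well-foundedness of the admissible ordering forces the process to terminate at $0$.

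The substantive step is (2) $\Rightarrow$ (3), proved by induction on the maximum leading term of a given representation. Writing $f = \sum_i h_{i,1}\, g_i\, h_{i,2}$ with $M := \max_i \lt(h_{i,1}\, g_i\, h_{i,2}) > \lt(f)$, I collect the indices $1,\dots,k$ realising the maximum. As in the commutative case the goal is to rewrite the sum so as to strictly decrease $M$ or the number $k$. The key new feature is a case analysis on the two occurrences of $\lt(g_1)$ and $\lt(g_2)$ inside the common monomial $\lt(h_{1,1})\lt(g_1)\lt(h_{1,2}) = \lt(h_{2,1})\lt(g_2)\lt(h_{2,2})$: either one occurrence sits inside the other (producing a factorisation $\lt(g_1) = m_1 \lt(g_2) m_2$, handled by the reduction $R_{g_2}(g_1)$), or the two occurrences genuinely overlap on a non-trivial suffix/prefix (handled by an S-polynomial at the corresponding small common multiple), or the two occurrences are disjoint inside the common word (handled by elementary rewriting, since the two factors can be moved past each other because they involve independent letter positions). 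In each case, hypothesis (2) lets us replace the offending pair by a linear combination of products $\tilde h_1 g \tilde h_2$ whose leading terms are strictly below $M$, and the induction proceeds.

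The main obstacle, compared to the commutative argument, is precisely this trichotomy: words in a free monoid admit richer overlap patterns than commutative monomials, so the "small common multiple" has to be defined as the shortest word in which both $\lt(g_1)$ and $\lt(g_2)$ occur with a genuine overlap, and one must check that the disjoint case can be handled without invoking a new S-polynomial. Once the three sub-cases are dispatched, the inductive bookkeeping on $(M,k)$ is routine and delivers (3), completing the cycle $1 \Rightarrow 2 \Rightarrow 3 \Rightarrow 1$.
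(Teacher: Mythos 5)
Your proposal is correct and follows exactly the route the paper intends: the paper omits this proof entirely, saying only that it is ``similar to the commutative case,'' and your three-condition equivalence $1 \Rightarrow 2 \Rightarrow 3 \Rightarrow 1$ together with the inclusion/overlap/disjoint trichotomy for the two occurrences of $\lt(g_1)$ and $\lt(g_2)$ inside the common word is precisely the standard non-commutative adaptation of the paper's commutative argument (your disjoint case resolves by the same substitution $\lt(g_2) = \lc(g_2)^{-1}(g_2 - \bar{g_2})$ that the paper uses in its commutative case (3)). The level of detail you give matches that of the paper's own commutative proof, so no genuine gap remains.
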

\begin{proof}
As the proof is similar like the commutative case so we will skip this.
\end{proof}

\section{Terminology for Gr\"{o}bner bases}
Before moving towards the computation of Gr\"{o}bner bases for both commutative and non-commutative cases we will first give some terminology regarding Gr\"{o}bner bases.
\begin{definition}
\textbf{Normal monomial:}\\
 Given a Gr\"{o}bner basis $G \subset I$,\hspace{1mm}normal monomials with respect to $G$ are those monomials which are not divisible by $\lt(g)$,\hspace{1mm}for $g \in G$.\\
 We sometime call normal monomials as normal words.
\end{definition}
\begin{lemma}
Cosets of normal monomials form a basis of $\mathbb{K}[x_1,x_2,\dots,x_n]/I$.
\end{lemma}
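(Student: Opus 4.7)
The plan is to reduce this statement directly to the earlier lemma (Lemma stating that cosets of monomials $m \notin \lt(I)$ form a basis of $R = \mathbb{K}[x_1,x_2,\dots,x_n]/I$), which we have already proved. The bridge between the two results is the identification
\[
\{\text{normal monomials w.r.t. } G\} \;=\; \{\text{monomials not lying in } \lt(I)\}.
\]
Once this identification is established, the conclusion is immediate.

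First I would unpack both sides of the claimed equality. By definition, a monomial $m$ is normal with respect to $G$ precisely when $m$ is not divisible by $\lt(g)$ for any $g \in G$. On the other hand, a monomial $m$ belongs to $\lt(I)$ (as a monomial, not merely as a linear combination) exactly when $m$ is divisible by some generator of $\lt(I)$. Since $G$ is a Gr\"obner basis, the set $\{\lt(g): g\in G\}$ generates $\lt(I)$ as an ideal. Because all these generators are monomials and the product of two monomials is again a monomial, a monomial $m$ lies in $\lt(I)$ if and only if $m = m' \lt(g)$ for some monomial $m'$ and some $g\in G$. This is precisely the negation of being normal.

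The main (very small) obstacle is making the last step rigorous: one has to note that an ideal generated by monomials, when intersected with the set of monomials, yields exactly the monomials divisible by one of the generators. This is a standard observation (one direction is obvious; for the other, write any element of the ideal as a $\mathbb{K}$-linear combination of products $p_i \lt(g_i)$ and observe that if the result is a single monomial, it already equals one of the summands up to scalar, hence is divisible by the corresponding $\lt(g_i)$).

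Having verified the identification, I would close by invoking the earlier lemma: the cosets of monomials not in $\lt(I)$ form a basis of $\mathbb{K}[x_1,x_2,\dots,x_n]/I$, and since these monomials are exactly the normal monomials with respect to $G$, their cosets form a basis of the quotient, which is the desired conclusion.
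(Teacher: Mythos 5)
Your proposal is correct and follows essentially the same route as the paper, which simply cites the earlier lemma that cosets of monomials not in $\lt(I)$ form a basis of the quotient. You additionally spell out the bridging fact that, for a Gr\"obner basis $G$, the monomials lying in $\lt(I)$ are exactly those divisible by some $\lt(g)$ — a detail the paper leaves implicit, so your write-up is if anything slightly more complete.
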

\begin{proof}
This is proved earlier in lemma 2.3.3
\end{proof}
\begin{definition}
\textbf{Reduced Gr\"{o}bner basis:}\\
$G$,\hspace{1mm}a Gr\"{o}bner basis of $I$ is reduced if for each $g \in G$,
\begin{itemize}
\item $\lc(g) = 1$.
\item $g - \lt(g)$ is a linear combination of normal monomials.
\end{itemize}
\end{definition}
\begin{theorem}
Let us fix an admissible ordering. Then every $I$ has a unique reduced Gr\"{o}bner basis.
\end{theorem}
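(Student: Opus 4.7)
The plan is to prove existence and uniqueness separately, using the previously established fact that every ideal has at least one Gröbner basis and that normal monomials form a basis of the quotient.

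For \textbf{existence}, I would start from any Gröbner basis $G_0$ of $I$ (guaranteed by Hilbert basis theorem together with the fact that $\lt(I)$ is a finitely generated ideal). First I would pass to a minimal sub-basis: discard any $g \in G_0$ such that $\lt(g)$ is divisible by $\lt(g')$ for some other $g' \in G_0$. The leading terms of what remains still generate $\lt(I)$, so we still have a Gröbner basis in which no leading term divides another. Next, scale each remaining generator so that $\lc(g) = 1$. Finally, to force the second condition of reducedness, I would iteratively reduce each $g$ against $G \setminus \{g\}$: if $g - \lt(g)$ contains a monomial $m$ divisible by $\lt(g')$ for some other $g' \in G$, replace that monomial using the appropriate reduction. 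This operation does not touch $\lt(g)$ (since no other leading term divides $\lt(g)$, by minimality), preserves the set of leading terms, and therefore preserves the Gröbner basis property. Termination follows from the well-ordering of monomials.

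For \textbf{uniqueness}, suppose $G_1$ and $G_2$ are two reduced Gröbner bases of $I$. The key lemma is that the leading-term sets $L_i = \{\lt(g) : g \in G_i\}$ coincide. This follows from a standard fact about monomial ideals: the monomials in $\lt(I)$ that are not divisible by any other monomial in $\lt(I)$ form the unique minimal monomial generating set. In a reduced Gröbner basis, no $\lt(g)$ is divisible by any other $\lt(g')$, so $L_i$ is exactly this unique minimal set; hence $L_1 = L_2$.

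Now, for each $m \in L_1 = L_2$ take the unique $g_1 \in G_1$ and $g_2 \in G_2$ with $\lt(g_1) = \lt(g_2) = m$. Since both have leading coefficient $1$, the difference $h := g_1 - g_2 = (g_1 - m) - (g_2 - m)$ lies in $I$ and is a linear combination of monomials that are normal with respect to $L_1 = L_2$. If $h \neq 0$, then $\lt(h) \in \lt(I)$ would be divisible by some element of $L_1$, contradicting normality. Hence $g_1 = g_2$, and since this holds for every $m$, we get $G_1 = G_2$. The main subtlety is the uniqueness of the minimal monomial generating set of $\lt(I)$ and making sure the reduction procedure in the existence part terminates and is well-defined without altering the leading terms; once those points are handled, the remaining arguments are essentially formal.
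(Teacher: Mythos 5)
Your proposal is correct and follows the same outline as the paper's proof (normalize leading coefficients, reduce tails to normal monomials, then show two reduced bases must agree element by element), but it is substantially more complete at two points where the paper is thin. First, for existence the paper only says that ``the reduction and S-polynomial suggests'' that the tails become combinations of normal monomials; your version makes this an actual algorithm (pass to a minimal set of leading terms, then iteratively reduce each tail against the others) and correctly observes that minimality guarantees the leading terms are untouched and that the well-ordering guarantees termination. Second, and more importantly, the paper's uniqueness argument silently assumes that two reduced Gr\"{o}bner bases have the same cardinality and can be matched up so that $\lt(f_i)=\lt(g_i)$; your key lemma --- that in a reduced basis the leading terms form exactly the unique minimal monomial generating set of $\lt(I)$, hence $L_1=L_2$ --- is precisely what is needed to justify this, and it is the real content of the uniqueness statement. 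The final step (the difference of two matched elements is an element of $I$ supported on normal monomials, hence zero) is the same in both proofs and is cleaner in your formulation than in the paper's ``its leading term must be a term that appeared in $f_i$ or $g_i$.'' One small point to tidy: in the minimization step, if two elements of $G_0$ happen to have equal leading terms, each divides the other and your rule as stated would discard both; you should keep one representative from each such class (or phrase the rule as ``divisible by the leading term of an element that is being kept'').
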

\begin{proof}
Let us take some Gr\"{o}bner basis $G \subset I$.\\
First condition of reduced Gr\"{o}bner basis is easy to satisfy as we just divide each $g$ by it's $\lc$,\hspace{1mm}i.e. $ g \rightarrow g/\lc(g)$.\\
The reduction and S-polynomial suggests that remaining terms of $g$ is not divisible by the leading term of any terms in $G$ which implies that $g - \lt(g)$ is a linear combination of normal monomials. Now we will prove the uniqueness.\\
Let $\{ f_1,f_2,\dots,f_s \}$ and $\{g_1,g_2,\dots,g_s \}$ be two reduced and ordered Gr\"{o}bner bases so that $\lt(f_i)=\lt(g_i)$ for each $i$.\hspace{1mm}Consider $f_i - g_i \in I$,\hspace{1mm}if it's not $0$,\hspace{1mm}then its leading term must be a term that appeared either in $f_i$ or in $g_i$.\hspace{1mm}In either case,\hspace{1mm}this contradicts the fact that the bases being reduced,\hspace{1mm}so in fact we get our required $f_i = g_i$.
\end{proof}

\section{Computation of Gr\"{o}bner bases}
 In this section we will show how to compute Gr\"{o}bner basis for an ideal $I$ of a polynomial ring $\mathbb{K}[x_1,x_2,\dots,x_n]$. We will show for both commutative and non-commutative polynomial rings. So let us begin with a computation for a commutative polynomial ring. But at first we will show the general algorithm to compute the Gr\"{o}bner basis.
 
 \subsection{Buchberger's algorithm}
 We start with an ideal $I$ generated by a set $G$. The Buchberger's algorithm[2],\hspace{1mm}which is a simple consequence of lemma 3.1.2,\hspace{1mm}is the following:\\
 Step 1: If the leading term of any element $u$ of $G$ occurs inside the leading term of another element $v$ of $G$,\hspace{1mm}then we reduce $v$ by subtracting off the required multiple of $u$. In general we will perform the reduction mentioned in either definition 3.1.2 or definition 4.0.5.\\
 Step 2: For each pair of distinct elements of $G$ (for both commutative and non-commutative cases) or even in non-commutative case for a same element we compute the S-polynomial and a remainder of it.\\
 Step 3: If the remainder can be reduced further then we will follow step 1 or we will add that term in our set $G$. If all S-polynomials reduce to $0$,\hspace{1mm}then the algorithm ends and $G$ is the Gr\"{o}bner basis of $I$. If not then we will continue further with our 3 steps.\\
 For commutative cases the algorithm ends in a finite number of stages. However for a non-commutative case there is no guarantee of the termination of the algorithm after a finite number of stages. In that case we start adding all elements which can't be reduced further in our set $G$ and in most cases we have seen a combinatorial interpretation for our terms in $G$. 
 
\subsection{In case of a commutative polynomial rings}
\begin{example}
 We have previously defined what is meant by a commutative polynomial ring. Let us take $\mathbb{K}(x_1,x_2)$ as our commutative polynomial ring with two variables $x_1,x_2$. Suppose there are two polynomials
 \[  h_1(x_1,x_2) = x_1^2 + x_2^2  \]
 \[  h_2(x_1,x_2) = x_1^3 + x_2^3 \]
 belonging to our polynomial ring $\mathbb{K}(x_1,x_2)$. We will compute the Gr\"{o}bner basis for $I=(h_1,h_2) \subset \mathbb{K}(x_1,x_2)$.\\
 Let us fix an admissible ordering. Usually we take \textbf{DEGLEX} ordering. So here we consider $x_1 > x_2$. So we get $\lt(h_1)=x_1^2$ and $\lt(h_2)=x_1^3$. So initially our set is $G=\{ h_1,h_2 \}$. But we see that $h_2$ can be reduced further. So we have $x_1^3= x_1\cdot x_1^2$,
 \[ R_{h_1}(h_2) = (x_1^3 +x_2^3) - x_1(x_1^2 +x_2^2)  \]
 \[  = x_2^3 -x_1x_2^2.     \]
So we have obtained a new term $x_2^3 - x_1x_2^2$ which cannot be reduced further,\hspace{1mm}we add this to our set $G$ which is now $\{ h_1,R_{h_1}(h_2) \}$. We call $R_{h_1}(h_2)$ as $h_3$. We see that the leading term of $h_3$ is $x_1x_2^2$. We have also found that $ x_1\cdot x_1x_2^2 = x_1^2 \cdot x_2^2$. So we will compute the S-polynomial between $h_1,h_3$.
\[ S(h_1,h_3) = -x_1(x_2^3 - x_1x_2^2) - (x_1^2 + x_2^2)x_2^2  \]
\[  = -x_1x_2^3 - x_2^4.   \]
The term $-x_1x_2^3 - x_2^4$ has $x_1x_2^3$ as the leading term which can be reduced further through $\lt(h_3)$. We get $x_1x_2^3 = (x_1x_2^2)\cdot x_2$. hence the reduction yields
\[ -x_1x_2^3 - x_2^4 -(x_2^3 -x_1x_2^2)x_2   \]
\[ = -2x_2^4    \]
which cannot be reduced further and also one cannot compute more S-polynomial. Hence we add $-2x_2^4$ in our set $G$ and the final set $G$ is our Gr\"{o}bner basis for $I$,\hspace{1mm}the set is precisely as follows
\[ \{ x_1^2+x_2^2,\hspace{1mm}x_2^3 -x_1x_2^2,\hspace{1mm}-2x_2^4 \}.   \]
\end{example}
\begin{remark}
The reduced Gr\"{o}bner basis of $I$ of our previous example is given by $\{ x_1^2+x_2^2,\hspace{1mm}x_1x_2^2-x_2^3,\hspace{1mm}x_2^4 \}$. It is not very difficult to obtain this reduced Gr\"{o}bner basis from our computed Gr\"{o}bner basis. If we recall the definition of reduced Gr\"{o}bner basis we will see that all leading co-efficients of the reduced basis should be $1$. So we just divide terms $-2x_2^4, x_2^3 - x_1x_2^2$ of $\{ x_1^2+x_2^2,\hspace{1mm}x_2^3 -x_1x_2^2,\hspace{1mm}-2x_2^4 \}$ by $-2$ and $-1$ respectively to obtain $\{ x_1^2+x_2^2,\hspace{1mm}x_1x_2^2-x_2^3,\hspace{1mm}x_2^4 \}$. Indeed $x_2^2$ and $x_2^3$ are normal monomials. So we have obtained the reduced Gr\"{o}bner basis of $I$.  
\end{remark}

\subsection{In case of non-commutative polynomial rings}

\begin{example}
Let us consider $\mathbb{K}\langle x,y \rangle$ as a non-commutative polynomial ring. We are going to compute Gr\"{o}bner basis for $I= (x^2 -xy)$. So we begin with our set $G$ as $\{ x^2-xy \}$ whose leading term is $x^2$ (we consider the \textbf{DEGLEX} for this case i.e. here $x>y$). We need to compute S-polynomial between polynomial $f_1= x^2-xy$ and $f_2=x$. We have $x^2 \cdot x = x \cdot x^2$. Then
\[ S(f_1,f_2) = (x^2-xy)x - x(x^2-xy)   \]
\[ = xxy - xyx.   \]
We see that $xxy-xyx$ whose leading term is $xxy$ can be reduced further. So we have $xxy = x^2 \cdot y$. So the reduction is
\[ (xxy-xyx) - (x^2-xy)y  \]
\[ = xxy - xyx - x^2y + xyy    \]
\[ = xyy - xyx   \]
which cannot be reduced further. so we include $xyy-xyx$ in our set $G$ which is now $\{ x^2-xy,xyy-xyx \}$. We see that $x^2 \cdot yx = x \cdot xyx$,\hspace{1mm}hence we compute S-polynomial between those elements of our set $G$ and we get
\[ (x^2-xy)yx + x(xyy - xyx)  \]
\[ = x^2yx - xyyx + xxyy - xxyx   \]
\[ = xxyy- xyyx    \]
this element with leading term $xxyy$ can be reduced further and we get $xxyy = x^2 \cdot yy$. So the reduction gives
\[ (xxyy - xyyx) - (x^2 -xy)yy   \]
\[ = xxyy - xyyx - x^2yy + xyyy  \]
\[ = xyyy - xyyx \]
which cannot be reduced further and we add this term in our existing set $G$ and obtain $\{ x^2-xy,xyy-xyx,xyyy-xyyx \}$.\\
Now we claim that the Gr\"{o}bner basis for $I= (x^2-xy)$ is given by
\[ \{ x^2 - xy \} \cup \hspace{1mm} \bigcup_{i=2}^{\infty} \{ xy^i - xy^{i-1}x \}.  \]
This is indeed very easy to prove. We will prove it by method of induction. We have already shown for $i=2,3$. Suppose upto $i=k$ steps the Gr\"{o}bner basis of $I$ is
\[ \{ x^2 - xy \} \cup \hspace{1mm} \bigcup_{i=2}^{k} \{ xy^i - xy^{i-1}x \}.  \]
But none of $\{xy^i - xy^{i-1}x \}$ for $2 \leq i \leq k$ have S-polynomial between them,\hspace{1mm}so $\lt(xy^k-xy^{k-1}x)= xy^{k-1}x$ and we have $x^2 \cdot y^{k-1}x = x \cdot xy^{k-1}x$ which leads us to compute the s-polynomial between $x^2-xy$ and $xy^k-xy^{k-1}x$ and we get
\[ (x^2-xy)y^{k-1}x + x(xy^k - xy^{k-1}x)   \]
\[ = x^2y^{k-1}x - xy^kx + x^2y^k - x^2y^{k-1}x    \]
\[ = x^2y^k - xy^kx    \]
which has the leading term $x^2y^k$ which can be reduced further. We get $x^2y^k = x^2 \cdot y^k$. Hence the reduction is 
\[ (x^2y^k - xy^kx) - (x^2 -xy)y^k  \]
\[ = xy^{k+1} - xy^kx   \]
which cannot be reduced further and so we add the term $xy^{k+1}- xy^kx$ in our existing set $G$.\\
Hence the proof and we have obtain our said Gr\"{o}bner basis.
\end{example}
 
\begin{example}
Let us give another example to compute Gr\"{o}bner basis for a non-commutative polynomial ring. Consider $\mathbb{K}\langle x,y,z \rangle$ as our non-commutative polynomial ring. We are going to compute Gr\"{o}bner basis for $I =(x^2, xy-zx)$. We will consider \textbf{DEGLEX} ordering,\hspace{1mm}so we consider $x > y > z$. The leading terms of $x^2$ and $xy-zx$ are $x^2$ and $xy$ respectively and both of them cannot be reduced further. So our initial set $G$ is $\{ x^2, xy-zx \}$. However we can compute S-polynomial between them based on $x^2 \cdot y = x \cdot xy$ and obtain
\[ x^2y - x(xy-zx)  \]
\[ = xzx     \]
which cannot be reduced further and so include it in our set $G$. So now our set $G$ is $\{ x^2, xy-zx, xzx \}$. We can compute S-polynomial between $xy-zx$ and $xzx$. We have $ xzx \cdot y = xz \cdot xy$ and so our S-polynomial is 
\[ (xzx)y - xz(xy-zx)  \]
\[ = xzzx  \]
which cannot be reduced further so we add it to $G$ and get $\{ x^2, xy-zx, xzx, xzzx \}$. This is how we proceed and claim that Gr\"{o}bner basis for $I$ is given by
\[ \{x^2, xy-zx \} \cup \hspace{1mm} \bigcup_{i=1}^{\infty}\{ xz^ix \}.  \]
We can prove this using the similar argument we have used in the previous example by method of induction.
\end{example}

\section{Hilbert series and it's computations}

\subsection{Graded algebra}
\begin{definition}
An algebra $A$ is \textbf{graded} if:
\begin{itemize}
\item $ A = \bigoplus_{i=0}^{\infty}A_i$
\item $ A_iA_j \subseteq A_{i+j}$.
\end{itemize}
\end{definition}
\begin{example}
The polynomial algebra is graded by degree.
\end{example}
\subsection{Hilbert Series}
Our main goal is to find tools to investigate properties of algebras. One of the main properties we are interested in is the size of an algebra. Most algebras we will be considering are infinite dimensional, so the question needs to be better defined. In the graded case at least, this is simple - we can ask the dimension of each graded component. This gives a sequence of numbers, which describes the size of the entire algebra. When we have a sequence of numbers, we can talk about the generating function. This is called \textit{Hilbert series} for our graded algebra.
\begin{definition}
Let $A = \bigoplus_{n=0}^{\infty}A_n$ be a graded algebra. Then the formal series
\[ H_A = \sum_{n=0}^{\infty}\dim(A_n)t^n \]
is called the \textbf{Hilbert series} of the algebra $A$.
\end{definition}
\begin{example}
Let us give an example - how to find the Hilbert series of the free associative algebra $A = \mathbb{K}\langle x,y,z\rangle$.\\
One can easily see that this algebra is graded - the graded components are homogeneous subspaces of each degree. So the subspace of degree $0$ has dimension $1$. The subspace of degree $1$ is generated by $x,y,z$. So that has dimension $3$. The subspace of degree $2$ is generated by $x^2,xy,xz,yx,y^2,yz,zx,zy,z^2$, which is of dimension $9$.\\
In general, the subspace of degree $n$ has dimension $3^n$ (as there are $3$ independent choices for each of the $n$ positions). So
\[ H_A = \sum_{n=0}^{\infty}3^nt^n = \frac{1}{1-3t}   \]
\end{example}
\begin{theorem}
If we have two graded algebras $U$ and $V$, then we grade the algebra $U \oplus V$ with components $U_n \oplus V_n$ (except when $n=0$, in which case $(U \oplus V)_n = K$). Similarly we can define the graded components of $U \otimes V$ as $\sum_{i=0}^{n}U_i \otimes V_{n-i}$. Then
\[ H_{U \oplus V} = H_U + H_V  \]
and
\[  H_{U \otimes V} = H_UH_V   \]
\end{theorem}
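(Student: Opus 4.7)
The plan is to compare coefficients of $t^n$ on both sides of each claimed identity for each $n \geq 0$. Both identities reduce to a single dimension count at each graded degree, so the argument should be essentially a direct calculation from the definitions with no auxiliary machinery.

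For the direct sum identity, I would fix $n \geq 1$ and invoke the stated grading $(U \oplus V)_n = U_n \oplus V_n$, which immediately gives $\dim (U \oplus V)_n = \dim U_n + \dim V_n$ by the standard dimension formula for direct sums of finite-dimensional vector spaces. Summing $t^n$ weighted by these dimensions then recovers $H_U + H_V$ on the right-hand side. For $n = 0$ the statement stipulates $(U \oplus V)_0 = \mathbb{K}$, and one has to check (or tacitly adopt, as the statement does) that $U_0$ and $V_0$ are each identified with $\mathbb{K}$ in a compatible way so that the coefficient of $t^0$ on both sides matches; this is a bookkeeping point rather than a genuine obstacle and should be stated explicitly.

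For the tensor product identity, I would fix $n$ and use the definition $(U \otimes V)_n = \bigoplus_{i=0}^{n} U_i \otimes V_{n-i}$. Since $\dim(U_i \otimes V_{n-i}) = \dim U_i \cdot \dim V_{n-i}$ for finite-dimensional spaces, and since the direct sum in the decomposition is finite, I would deduce $\dim (U \otimes V)_n = \sum_{i=0}^{n} \dim U_i \cdot \dim V_{n-i}$. I would then recognise the right-hand side as precisely the coefficient of $t^n$ in the Cauchy product $H_U \cdot H_V$, completing the identification of coefficients.

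The only conceptual obstacle is ensuring that all quantities are well-defined: one should note that the graded components $U_n$ and $V_n$ need to be finite-dimensional for the Hilbert series to make sense as formal power series with integer coefficients, and one should observe that the Cauchy product of two such series is again a well-defined formal series because each coefficient involves only a finite sum. Once these book-keeping points are settled, both identities follow from a short coefficient-by-coefficient check and do not require any deeper input.
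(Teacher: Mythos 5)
Your proof is correct and follows essentially the same route as the paper: a coefficient-by-coefficient comparison using $\dim(U \oplus V)_n = \dim U_n + \dim V_n$ and $\dim(U \otimes V)_n = \sum_{i=0}^{n}\dim U_i \dim V_{n-i}$, the latter identified with the Cauchy product coefficient of $H_U H_V$. Your extra remarks on the $n=0$ convention and finite-dimensionality are reasonable bookkeeping that the paper leaves implicit.
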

\begin{proof}
For the first equality, we simply note that:
\[ \dim(U \oplus V)_n = \dim U_n + \dim V_n.   \]
For the second,
\[  \dim(U \otimes V)_n = \sum_{i=0}^{n}\dim U_i \dim V_{n-i}  \]
this exactly matches the co-efficient of $t^n$ in $H_UH_V$.
\end{proof}
\begin{corollary}
We call a subspace $V$ homogeneous if $V = \oplus A_n \cap V$. If $U$,$V$ are homogeneous subspaces of some algebra, then $H_{U+V} \leq H_U + H_V$ and $H_{UV} \leq H_UH_V$ (where the inequality is co-efficient wise, and the sum and product are set sum and multiplications).
\end{corollary}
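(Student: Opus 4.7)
The plan is to deduce both inequalities by comparing the two sides componentwise in each graded degree $n$, following the same strategy as in the preceding theorem but noting where equality must be weakened to inequality. Because $U$ and $V$ are homogeneous subspaces, we have $U = \bigoplus_n U_n$ and $V = \bigoplus_n V_n$ with $U_n, V_n \subseteq A_n$, so it suffices to establish
\[ \dim(U+V)_n \leq \dim U_n + \dim V_n \quad \text{and} \quad \dim(UV)_n \leq \sum_{i=0}^{n}\dim U_i \dim V_{n-i}, \]
since these exactly match the coefficients of $t^n$ in the right-hand sides of the two asserted inequalities.

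For the first inequality, I would observe that homogeneity gives $(U+V)_n = U_n + V_n$ as subspaces of $A_n$, and then invoke the standard dimension identity
\[ \dim(U_n + V_n) = \dim U_n + \dim V_n - \dim(U_n \cap V_n) \leq \dim U_n + \dim V_n. \]
Equality fails precisely when $U$ and $V$ have a nontrivial homogeneous intersection, which is why this is only an inequality whereas the direct-sum case in the previous theorem gave equality.

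For the second inequality, I would first argue that $(UV)_n$ is spanned by elements of the form $uv$ with $u \in U_i$, $v \in V_{n-i}$ for $0 \leq i \leq n$, again using homogeneity to split each factor into graded pieces. Choosing bases $\{u_1^{(i)}, \dots, u_{\dim U_i}^{(i)}\}$ of $U_i$ and $\{v_1^{(n-i)}, \dots, v_{\dim V_{n-i}}^{(n-i)}\}$ of $V_{n-i}$, the products $u_a^{(i)} v_b^{(n-i)}$ form a spanning set for $(UV)_n$ of size $\sum_i \dim U_i \dim V_{n-i}$, and so the dimension is bounded above by this sum. The inequality is strict exactly when the algebra multiplication introduces relations among such products.

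The proof is essentially routine; the only mild subtlety, which is really the conceptual content of the corollary, is pinpointing why equality fails in both cases. The sum case fails through overlap ($U_n \cap V_n \ne 0$), and the product case fails through multiplicative relations inside $A$, in contrast with the preceding theorem where direct sums and tensor products enforce independence by construction.
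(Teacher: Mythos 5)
Your proof is correct and is essentially the argument the paper intends: the paper simply says the corollary ``directly follows from Theorem 7.2.1,'' i.e.\ that $U+V$ and $UV$ are images of $U\oplus V$ and $U\otimes V$ under the natural (grading-preserving) surjections, so the coefficients can only decrease; your degree-by-degree dimension count and spanning-set bound make that same observation explicit. Your diagnosis of when equality fails also matches the paper's remark that equality holds exactly when every element of $U+V$ is written uniquely as $u+v$.
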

\begin{proof}
This directly follows from theorem 7.2.1. We have equality if every element of $H_{U+V}$ can be written uniquely as $u+v$.
\end{proof}
\begin{lemma}
If $A$ is an algebra with set of normal words $N$, then $H_A = H_N$.
\end{lemma}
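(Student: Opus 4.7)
The plan is to leverage the basis statement already established as Lemma 2.3.3 (and restated as the lemma in Section 5 about normal monomials), which says that cosets of the normal monomials form a vector space basis of $A$. The only substantive thing to add is that this basis is compatible with the grading, which then lets one read off dimensions degree-by-degree.

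First I would note that for $H_A$ to even be defined, the ideal $I$ must be generated by homogeneous elements, which I will assume implicitly (as the statement of Hilbert series requires). Under this assumption, one may choose a Gröbner basis $G$ consisting of homogeneous polynomials: indeed, Buchberger's algorithm starts from homogeneous generators, and each reduction and S-polynomial of homogeneous polynomials is again homogeneous, so $G$ may be taken homogeneous. Consequently every normal monomial is a homogeneous element, and the set $N$ of normal monomials admits a natural decomposition $N = \bigsqcup_{n \geq 0} N_n$, where $N_n$ consists of those normal monomials of degree $n$.

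Next I would invoke the basis lemma: the cosets $\{\bar{m} : m \in N\}$ form a $\mathbb{K}$-basis of $A$. Since each $\bar m$ is homogeneous of degree $\deg(m)$, restricting to a fixed degree $n$ shows that $\{\bar m : m \in N_n\}$ is a basis of the graded component $A_n$. Therefore
\[ \dim A_n = |N_n|. \]
Summing over $n$ yields
\[ H_A = \sum_{n \geq 0} \dim(A_n)\, t^n = \sum_{n \geq 0} |N_n|\, t^n = H_N, \]
where the last equality uses the convention that the Hilbert series of a graded set of monomials is the generating function counting monomials by degree.

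There is no real obstacle beyond the homogeneity check; the heart of the argument was already done in Lemma 2.3.3. The only point worth emphasizing is the compatibility of the normal-monomial basis with the grading, which is why taking a homogeneous Gröbner basis is essential. Without that, the cosets of normal monomials would still span $A$ and be linearly independent, but one could not split the count by degree.
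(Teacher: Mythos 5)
Your proof is correct and rests on the same key fact as the paper's: that cosets of normal monomials form a basis of $A$ (equivalently, the free algebra splits as $N \oplus I$), so the graded dimensions of $A$ are counted by normal monomials. You add one worthwhile point the paper leaves implicit --- that the ideal, and hence the Gr\"obner basis, must be homogeneous for the grading to descend to the quotient and for the degree-by-degree count to be legitimate.
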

\begin{proof}
$A$ is the free associative algebra, mod some ideal $I$. The free associative algebra itself decomposes into $N \oplus I$, so it's Hilbert series can be written as $H_N + H_I$. Modding out by $I$ turns the free algebra into our algebra $A$, and kills the $H_I$ term. So $H_A = H_N$ as required.
\end{proof}
\begin{example}
(Hilbert series of polynomial algebra):\\
Let us give another example - how to find the Hilbert series of the polynomial algebra $A = \mathbb{K}[x,y]$.\\
Again we can take grading by degree. So the subspace of degree $0$ has dimension $1$. The subspace of degree $1$ is generated by $x,y$, so it has dimension $2$. The subspace of degree $2$ is generated by $x^2,xy,y^2$, which has dimension $3$.\\
In general, the subspace of degree $n$ has dimension $n+1$ (as the word is determined by the position where the $x$'s stop and the $y$'s begin; there are $n+1$ such places). So
\[  H_A = \sum_{n=0}^{\infty}(n+1)t^n  \]
we note that
\[  \frac{1}{1-t} = \sum_{n=0}^{\infty}t^n.  \]
Taking derivatives on both sides give:
\[ \frac{1}{(1-t)^2} = \sum_{n=0}^{\infty}(n+1)t^n    \]
So
\[  H_A = \frac{1}{(1-t)^2}  \]
\end{example}
\begin{theorem}
The Hilbert series of the polynomial algebra $\mathbb{K}[X]$ is computed by the formula
\[ H_{\mathbb{K}[X]} = \prod_{x \in X}(1- t^{|x|})^{-1}.   \]
The Hilbert series of the exterior algebra $\bigwedge \mathbb{K}[X]$ is calculated by the formula
\[ H_{\bigwedge \mathbb{K}[X]} = \prod_{x \in X}(1 + t^{|x|}).   \]
In particular, in case of natural graduation and a finite set of generators $d$, we have:
\[ H_{\mathbb{K}[X]}^{-1} = (1-t)^d;\hspace{5mm} H_{\bigwedge \mathbb{K}[X]} = (1+t)^d     \]
\end{theorem}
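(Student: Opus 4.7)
The plan is to reduce both formulas to a calculation for a single generator and then invoke the tensor product multiplicativity of Hilbert series established in Theorem 7.2.1. The key structural observation I would use is that both constructions are monoidal in the generator set: concretely, one has the isomorphisms of graded algebras
\[ \mathbb{K}[X] \;\cong\; \bigotimes_{x \in X} \mathbb{K}[x], \qquad \bigwedge \mathbb{K}[X] \;\cong\; \bigotimes_{x \in X} \bigwedge(\mathbb{K}\cdot x), \]
where each tensor factor on the right carries the grading induced from $|x|$. Once these are in hand, I can apply $H_{U \otimes V} = H_U H_V$ factor by factor (extended to the possibly infinite product in the obvious formal-series sense, since contributions in each fixed degree $n$ only involve finitely many generators).

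Next I would compute the one-generator pieces directly. For the polynomial case, the graded components of $\mathbb{K}[x]$ are spanned by $1, x, x^2, \ldots$, so the $n$-th power $x^n$ lives in degree $n|x|$, giving
\[ H_{\mathbb{K}[x]} \;=\; \sum_{n \geq 0} t^{n|x|} \;=\; \frac{1}{1 - t^{|x|}}. \]
For the exterior case, the relation $x \wedge x = 0$ forces $\bigwedge(\mathbb{K}\cdot x) = \mathbb{K} \oplus \mathbb{K}\cdot x$, a two-dimensional graded space with Hilbert series $1 + t^{|x|}$. Multiplying these factors over $x \in X$ yields the two product formulas. The final line of the theorem, for $|X| = d$ generators all of degree one, is then just the specialization $|x| = 1$, turning each factor into $1/(1-t)$ or $(1+t)$ respectively.

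The main step that actually needs justification is the tensor decomposition itself; once that is granted, everything else is a one-line generating-function calculation. I would handle the polynomial case by checking that the obvious multiplication map sending $x_{i_1}^{a_1} \otimes \cdots \otimes x_{i_k}^{a_k}$ to the corresponding monomial is a graded linear bijection, which is essentially the statement that ordered monomials form a basis of $\mathbb{K}[X]$ (this is also recoverable from Lemma 7.2.3 applied to the commutator relations $x_i x_j - x_j x_i$, whose normal monomials are exactly the sorted products). The exterior case is analogous, but now one must check both the anti-commutation $xy = -yx$ for $x \neq y$ and the squaring relation $x^2 = 0$; the normal monomials here are the strictly increasing products $x_{i_1} \cdots x_{i_k}$, and each factor $\bigwedge(\mathbb{K}\cdot x)$ contributes either nothing or one copy of $x$, reproducing exactly the expansion of $\prod_{x \in X}(1 + t^{|x|})$.

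Alternatively, one may bypass the tensor decomposition entirely and argue directly via normal monomials in the spirit of Lemma 7.2.3: the coefficient of $t^n$ in $\prod_{x \in X}(1 - t^{|x|})^{-1}$ counts multisets of generators with total weight $n$, which is exactly the dimension of the degree-$n$ component of $\mathbb{K}[X]$, and similarly for $\bigwedge \mathbb{K}[X]$ one counts subsets. I would present the tensor product argument as the main proof since it is shorter and uses Theorem 7.2.1 cleanly, mentioning the combinatorial bijection as an intuitive check.
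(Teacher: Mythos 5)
Your proposal is correct and follows essentially the same route as the paper: compute the one-generator cases $(1-t^{|x|})^{-1}$ and $1+t^{|x|}$ directly, combine them via the multiplicativity $H_{U\otimes V}=H_UH_V$ of Theorem 7.2.1, and handle an infinite generating set by observing that each fixed degree $n$ only involves finitely many generators. The only difference is that you make explicit the tensor decompositions $\mathbb{K}[X]\cong\bigotimes_{x\in X}\mathbb{K}[x]$ and its exterior analogue, which the paper's proof leaves implicit; this is a reasonable bit of added rigor rather than a departure in method.
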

\begin{proof}
In the case of one generator, the Hilbert series in the power of $n = |x|$ is computed straightforwardly: it is equal to $1 + t^n + t^{2n} + \dots = (1-t^n)^{-1}$ in case of polynomial ring and $1+t^n$ in the case of exterior algebra. The case of finite number of generators reduces to this one, with the help of theorem 7.2.1. Finally, in case of infinite number of generators, the degree of generators must increase, for if not, we do not get finite-dimensionality. Consequently, for every $n$, the segment of the Hilbert series up to the exponent $n$ depends only on finite number of generators with the degree not exceeding $n$, thus everything reduces to the finite case.
\end{proof}
\begin{remark}
If we apply the formula of Hilbert series for polynomial algebra of theorem 7.2.2 to our previous computed example 7.2.2 we will get the exact result.
\end{remark}
\subsection{Hilbert series for free product} 
\begin{definition}
We define the \textbf{free product} of two algebra $A,B$ as the disjoint union of their generators, with both sets of relations. We usually denote it by $A \ast B$.
\end{definition}
\begin{example}
The free product of the algebra $A = \langle x\hspace{1mm} |\hspace{1mm} x^3 + 2x^2 \rangle$ and the algebra $B = \langle x,y\hspace{1mm} |\hspace{1mm} 2x^2 = y^2 \rangle$ is given by the algebra $ A \ast B = \langle x,y,z\hspace{1mm} |\hspace{1mm} x^3 + 2x^2 , 2y^2 = z^2 \rangle$.
\end{example}
\begin{theorem}[Hilbert series of free product]
If $A,B$ are graded algebras, then
\[ (H_{A \ast B})^{-1} = H_A^{-1} + H_B^{-1} - 1.   \]
\end{theorem}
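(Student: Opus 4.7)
The plan is to establish a direct-sum decomposition of $A \ast B$ as a graded vector space and then convert it into a generating-function identity. Write $A = \mathbb{K}\cdot 1 \oplus \bar{A}$ and $B = \mathbb{K}\cdot 1 \oplus \bar{B}$, where $\bar{A} = \bigoplus_{n \geq 1} A_n$ and $\bar{B} = \bigoplus_{n \geq 1} B_n$ are the positive-degree (augmentation) subspaces. The normal-form theorem for free products expresses $A \ast B$ as
\[ A \ast B \;=\; \mathbb{K} \;\oplus\; \bar{A} \;\oplus\; \bar{B} \;\oplus\; (\bar{A} \otimes \bar{B}) \;\oplus\; (\bar{B} \otimes \bar{A}) \;\oplus\; (\bar{A} \otimes \bar{B} \otimes \bar{A}) \;\oplus\; (\bar{B} \otimes \bar{A} \otimes \bar{B}) \;\oplus\; \cdots, \]
i.e., as the direct sum of the ground field and all non-empty alternating tensor products of copies of $\bar{A}$ and $\bar{B}$. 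From the viewpoint of this paper, this is exactly the assertion that normal monomials in $A \ast B$ (for a natural ordering extending the ones on $A$ and $B$) are alternating concatenations of non-scalar normal monomials from $A$ and $B$; it follows from Lemma 2.3.3 together with the fact that the defining relations of $A \ast B$ are the disjoint union of those of $A$ and of $B$, so no reduction ever mixes factors across blocks.

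Next I convert the decomposition into power series via Theorem 7.2.1. Set $X = H_A - 1$ and $Y = H_B - 1$, the Hilbert series of $\bar{A}$ and $\bar{B}$. Let $W_A$ and $W_B$ denote the Hilbert series of the subspaces spanned by non-empty alternating words starting with an $\bar{A}$-block, respectively a $\bar{B}$-block. Since such a word consists of a single block followed either by nothing or by a word starting with the opposite type of block, the decomposition above yields the coupled equations
\[ W_A \;=\; X(1 + W_B), \qquad W_B \;=\; Y(1 + W_A). \]
Solving this linear $2 \times 2$ system in $\mathbb{Z}[[t]]$ gives $W_A = X(1+Y)/(1-XY)$ and $W_B = Y(1+X)/(1-XY)$.

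Finally I assemble $H_{A \ast B} = 1 + W_A + W_B$ and simplify. A short manipulation shows that the combined numerator collapses to $(1+X)(1+Y) = H_A H_B$ and the denominator to $H_A + H_B - H_A H_B$, so
\[ H_{A \ast B} \;=\; \frac{H_A H_B}{H_A + H_B - H_A H_B}. \]
Taking reciprocals of both sides immediately yields the claimed identity $H_{A \ast B}^{-1} = H_A^{-1} + H_B^{-1} - 1$. The main obstacle is rigorously justifying the alternating-tensor decomposition of $A \ast B$; once that is established, everything else is formal manipulation in the power-series ring. This obstacle is best handled via the Gr\"obner-basis and normal-words machinery developed in Sections 3--5, which describes precisely which monomials survive in the quotient presenting $A \ast B$ as a free associative algebra modulo the disjoint union of the two sets of relations.
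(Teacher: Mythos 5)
Your proposal is correct and follows essentially the same route as the paper: the same decomposition of $A \ast B$ into the scalars plus alternating words beginning with a non-scalar block of $A$ or of $B$ (your $W_A, W_B$ are the paper's $H_{V_1}, H_{V_2}$), the same coupled equations $W_A = (H_A-1)(1+W_B)$ and $W_B = (H_B-1)(1+W_A)$, and the same justification via the Gr\"obner basis of $A \ast B$ being the disjoint union of those of $A$ and $B$. No substantive differences to report.
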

\begin{proof}
Any word/monomial in $A \ast B$ is either begins with a (non-scalar) element of $A$, or an element of $B$ (excluding terms that belong to the underlying field). This follows from the fact that, because the two sets of generators have no overlap, the Gr\"{o}bner basis of $A \ast B$ will be the union of the Gr\"{o}bner basis of $A$ and the Gr\"{o}bner basis of $B$. So take any monomial in $A \ast B$. It begins with a generator either from $A$ or $B$. Without loss of generality say its from $A$. Then take the longest prefix of this word that consists of generators from $A$. This must be a word in $A$, otherwise it would contain a leading term of the Gr\"{o}bner basis for $A$, and hence a leading term in the Gr\"{o}bner basis for $A \ast B$.\\ 
So we know that words in $A \ast B$ start with a word from either $A$ or $B$. That suggests the following decomposition:
\[  H_{A \ast B} = H_{V_1} + H_{V_2} +1     \]
where $V_1$ are the elements that begin with a non-scalar element of $A$, and $V_2$ are the elements that begin with a non-scalar element of $B$. So we have
\[  H_{V_1} = (H_A -1)(H_{V_2} + 1)    \]
\[  H_{V_2} = (H_B -1)(H_{V_1} + 1).     \]
The above imply that
\[  H_{V_1}  = \frac{H_AH_B - H_B}{H_A + H_B - H_AH_B}    \]
\[  H_{V_2}  = \frac{H_AH_B - H_A}{H_A + H_B - H_AH_B}    \]
So
\[  H_{A \ast B} = \frac{H_AH_B - H_B}{H_A + H_B - H_AH_B} + \frac{H_AH_B - H_A}{H_A + H_B - H_AH_B} + 1      \]
\[  = \frac{2H_AH_B - H_A - H_B - H_AH_B + H_A + H_B}{H_A + H_B - H_AH_B}    \]
\[  = \frac{H_AH_B}{H_A + H_B - H_AH_B}.     \]
Hence
\[  (H_{A \ast B})^{-1} = H_A^{-1} + H_B^{-1} -1.   \]
\end{proof}

\subsection{Calculating Hilbert series through chains}
Let $A = \mathbb{K}\langle X \hspace{1mm}|\hspace{1mm}R \rangle$ be an algebra where $X$ is the set of generators and $R$ is the set of relations. Let the set of normal words be $N$, and say we have a reduced Gr\"{o}bner basis $G$. Let $F$ be the set of leading terms of $G$ - we call the elements of $F$ \textit{obstructions}. A word is normal with respect to $G$ if and only if it does not contain any of the elements of $F$ as a subword. Thus a word $s$ is normal with respect to $G$ if and only if it's normal with respect to $F$. Therefore the algebra $\hat{A} = \mathbb{K}\langle X \hspace{1mm}|\hspace{1mm}F \rangle$ has the same normal words as $A$. And so $H_A = H_{\hat{A}}$. The advantage of dealing with $F$ instead of $G$ is that $F$ consists of monomials only, which makes it much easier to deal with.

\begin{definition}[Chains]
A $(-1)$ - chain is the empty word and is its own tail. The $0$-chains are the elements of the generating set $X$, and are also their own tails. We will define chain inductively: a $n$-chain is a word $f$ of the form $gt$, with some conditions on $g$ and $t$. Firstly, $g$ must be a $(n-1)$-chain and $t$ is a normal word. Secondly, if $r$ is the tail of $g$ then $\deg_F rt = 1$; that is, the word $rt$ contains exactly one element of $F$ as a subword. This subword must occur at the end of $rt$. The tail of $gt$ is defined to be $t$.\\ 
We denote the space spanned by $n$-chains by $C_n$.
\end{definition}
\begin{example}
Let $F = \{ x^3 \}$. The unique 1-chain is $x^3= x \cdot x^2$ and its tail is $x^2$. Then the unique 2-chain is $\overline{xxxx} = x^3 \cdot x$. The word $x^3 \cdot x^2$ is not a 2-chain, since $\deg_F x^2x^2 = 2$. The unique 3-chain is the word $x^6=x^4x^2$. The word $x^5 = x^4x$ is not a 3-chain because $\deg_F x\cdot x =0$, regardless of the fact that it can be represented ($xxxxx$) as a link of three obstructions $\overline{xxx}$ (the fact is that the first one intersects with the last one).\\ 
In general the $n$-chain is given by $x^{n+1}\cdot x$ if $n$ is even and $x^{n+1}\cdot x^2$ if $n$ is odd. We see that in this case for every $n$ there exists only one $n$-chain.
\end{example}
\begin{theorem}
Let
\[ \begin{tikzcd}
  \dots \arrow{r}{d_{n+1}} & A_n \arrow{r}{d_n} & A_{n-1} \arrow{r}{d_{n-1}} & \dots \arrow{r}{d_{k+1}} & A_k \arrow{r}{d_k} & K \arrow{r} & 0 
  \end{tikzcd}\]
  be an exact sequence of graded spaces (i.e $\ker d_i = \Ima d_{i+1}$). Then
  \[  \sum_{i=k}^{\infty} (-1)^i H_{A_i} = (-1)^k     \]
  if the sum is well defined.
\end{theorem}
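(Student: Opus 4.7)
The plan is to reduce the statement to the classical fact that for a bounded exact sequence of finite-dimensional vector spaces the alternating sum of dimensions vanishes, and then to package the resulting numerical identity into an identity of formal power series.

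First I would work degree by degree. For each fixed integer $d \geq 0$, the graded piece of the exact sequence in degree $d$ is still exact, since the differentials $d_i$ preserve grading, and therefore so do kernels and images. The hypothesis that the sum $\sum_{i=k}^{\infty}(-1)^i H_{A_i}$ be well-defined is exactly the statement that in each fixed degree $d$ only finitely many of the spaces $(A_i)_d$ are nonzero, so that the coefficient of $t^d$ in the alleged identity is a finite sum. Combined with the hypothesis that $K$ is concentrated in degree $0$ with $\dim K_0 = 1$ and $\dim K_d = 0$ for $d > 0$, this means that in each degree $d$ we have a finite exact sequence
\begin{equation*}
0 \longrightarrow (A_N)_d \longrightarrow (A_{N-1})_d \longrightarrow \dots \longrightarrow (A_k)_d \longrightarrow K_d \longrightarrow 0
\end{equation*}
of finite-dimensional vector spaces (where $N = N(d)$ is large enough that $(A_i)_d = 0$ for $i > N$).

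Next I would invoke the Euler characteristic identity: for any bounded exact sequence of finite-dimensional vector spaces, the alternating sum of dimensions is zero. The quickest proof is by induction on length, splitting the sequence into short exact sequences $0 \to \ker d_i \to A_i \to \Ima d_i \to 0$ and using that $\dim V = \dim(\ker) + \dim(\Ima)$ for each map, together with the fact that at each junction $\Ima d_{i+1} = \ker d_i$ so these dimensions cancel telescopically. Applied to the degree-$d$ sequence above, treating $K$ as sitting in homological position $k-1$, this yields
\begin{equation*}
\sum_{i=k}^{\infty}(-1)^i \dim (A_i)_d + (-1)^{k-1}\dim K_d = 0,
\end{equation*}
equivalently $\sum_{i=k}^{\infty}(-1)^i \dim (A_i)_d = (-1)^k \dim K_d$.

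Finally I would multiply this identity by $t^d$ and sum over $d \geq 0$. The left-hand side becomes $\sum_{i=k}^{\infty}(-1)^i H_{A_i}(t)$ (interchanging the two sums is legitimate because in each degree the $i$-sum is finite), while the right-hand side becomes $(-1)^k H_K(t) = (-1)^k$ since $K$ has Hilbert series $1$. This gives the desired formula. There is no serious obstacle here; the only point requiring care is the interchange of the two summations and the interpretation of the well-definedness hypothesis, both of which are handled by noting that the $i$-sum is finite in each fixed degree.
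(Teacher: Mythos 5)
Your proof is correct and is essentially the same argument as the paper's: both rest on rank--nullity ($\dim A_i = \dim\ker d_i + \dim\Ima d_i$), exactness ($\ker d_i = \Ima d_{i+1}$), and the resulting telescoping of the alternating sum. The paper carries this out directly at the level of Hilbert series via $H_{A_i} = H_{\Ima d_{i+1}} + H_{\Ima d_i}$, whereas you work degree by degree and then reassemble the power series; this is only a difference in bookkeeping, though your explicit treatment of the well-definedness hypothesis is slightly more careful.
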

\begin{proof}
We know that if $f: V \rightarrow W$ is any linear transformation such that $f(V_n) \subset f(W_n)$, then $H_V = H_{\ker f} + H_{\Ima f}$. So
\[  H_{A_i} = H_{\ker {d_i}} + H_{\Ima {d_i}} = H_{\Ima {d_{i+1}}} + H_{\Ima {d_i}},  \]
where the second equality comes from exactness. Thus taking the alternating sum of these equalities, we get our required result.
\end{proof}
\begin{theorem}[Hilbert series from chains]
Let $A$ is an algebra and $C_n$ be the linear span of $n$-chains. We point out that
\[ H_{C_{-1}} = H_K = 1;\hspace{4mm} H_{C_{0}} = H_X; \hspace{4mm} H_{C_1}= H_F.   \]
Then
\[  H_A = ( H_{C_{-1}} - H_{C_0} + H_{C_1} - H_{C_2} + \dots )^{-1}.    \]
\end{theorem}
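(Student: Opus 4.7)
The plan is to construct an exact sequence of graded vector spaces
\[
\dots \to A\otimes C_n \xrightarrow{d_n} A\otimes C_{n-1} \to \dots \to A\otimes C_0 \xrightarrow{d_0} A \xrightarrow{\varepsilon} \mathbb{K} \to 0
\]
and then read the claimed identity off from theorem 7.4.1 (the Euler-characteristic lemma for graded spaces). Since the theorem concerns only Hilbert series, I would first replace $A$ by the monomial algebra $\hat A = \mathbb{K}\langle X\,|\,F\rangle$, which as noted before Definition 7.4.1 has the same normal words and hence, by lemma 7.2.1, the same Hilbert series as $A$. Working with $\hat A$ is convenient because the complex above can then be described combinatorially on basis elements: a basis of $A\otimes C_n$ consists of tensors $w\otimes c$ with $w$ a normal word and $c$ an $n$-chain, and the recursive decomposition $c = g\cdot t$ from Definition 7.4.1 is exactly what will drive the definition of the differential.

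Next I would write down the maps. The augmentation $\varepsilon$ kills $A_+ = \bigoplus_{i>0}A_i$, and $d_0 : A\otimes X \to A$ is multiplication. For $n\geq 1$, $d_n(1\otimes gt)$ is defined by a signed expression that exploits the defining property of a chain: the tail $r$ of $g$ concatenated with $t$ contains exactly one obstruction, occurring at its end, and rewriting that obstruction as a relation yields an element of $A\otimes C_{n-1}$. Verifying $d_{n-1}\circ d_n = 0$ is then a direct calculation, while exactness is the real content. I would establish it by constructing an explicit $\mathbb{K}$-linear contracting homotopy $s_n$ whose action on a tensor of the form (normal word)$\otimes$(chain) is dictated by peeling one letter at a time from the $A$-factor into the chain factor and locating the next obstruction; the well-posedness of $s_n$ relies precisely on the uniqueness built into the chain definition.

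Once exactness is in hand, reindex the sequence as $A_0 := A$ and $A_{i+1} := A\otimes C_i$ for $i\geq 0$, so that $A_0 \to \mathbb{K} \to 0$ is its tail. Theorem 7.4.1 with $k=0$ then gives
\[
H_A - H_{A\otimes C_0} + H_{A\otimes C_1} - H_{A\otimes C_2} + \dots = 1.
\]
Applying theorem 7.2.1 to rewrite $H_{A\otimes C_n} = H_A\cdot H_{C_n}$, factoring $H_A$ out, and using $H_{C_{-1}} = H_{\mathbb{K}} = 1$ yields
\[
H_A\bigl(H_{C_{-1}} - H_{C_0} + H_{C_1} - H_{C_2} + \dots\bigr) = 1,
\]
which is exactly the inverse formula claimed.

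The main obstacle is of course the exactness of the chain complex; this is where the entire combinatorial content of the theorem lives, and it is essentially Anick's resolution viewed as a complex of graded vector spaces. Everything else is formal: the alternating-sum identity of theorem 7.4.1 together with the product rule for Hilbert series under tensor products. The clever feature of Definition 7.4.1 is that the condition $\deg_F rt = 1$ with the single obstruction forced to sit at the end of $rt$ is engineered precisely so that a contracting homotopy exists; once that combinatorial bookkeeping is carried out by induction on word length, the resolution follows and the Hilbert-series identity is immediate.
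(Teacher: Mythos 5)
Your proposal follows essentially the same route as the paper: reduce to the monomial algebra $\hat{A}=\mathbb{K}\langle X\,|\,F\rangle$, build the complex of graded spaces $C_n\otimes\hat{A}$ with the differential peeling the tail off a chain, invoke exactness, and then combine theorem 7.4.1 with $H_{C_n\otimes\hat{A}}=H_{C_n}H_A$ to extract the inverse formula. If anything you are more careful than the paper at the one genuinely delicate point --- the paper merely asserts $d^2=0$ and a (loosely stated) surjectivity claim, whereas your plan of an explicit contracting homotopy is what an honest exactness proof actually requires; note only that over the monomial algebra $\hat{A}$ the differential needs no signs or relation-rewriting, since any word containing an obstruction is already zero there.
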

\begin{proof}
Let $\hat{A} = \mathbb{K}\langle X \hspace{1mm}|\hspace{1mm}F \rangle$ where $F$ is our set of obstructions. Let $d_n: C_n \otimes \hat{A} \rightarrow C_{n-1} \otimes \hat{A}$ be defined by $d_n(gt \otimes a) = g \otimes ta$. Then $d_n(d_{n+1}(gt \otimes a)) = d_n( g \otimes ta) = g' \otimes t'ta$, where $g \in C_n$, $g' \in C_{n-1}$. By the definition of $n$-chains it follows that there must be an obstruction in $t'ta$, so this is $0$.\\ 
Also $d_n$ is a surjection, as for every $ g \otimes a \in C_{n-1} \otimes \hat{A}$, we can decompose $a$ as $a_1a_2$ such that $ga_1 \otimes a_2 \in C_n \otimes \hat{A}$. So we have an exact sequence:
\[ \begin{tikzcd}
 \dots \arrow{r}{d_{n+1}} & C_n \otimes \hat{A} \arrow{r}{d_n} & C_{n-1} \otimes \hat{A} \arrow{r}{d_{n-1}} & \dots \arrow{r}{d_0} & C_{-1} \otimes \hat{A} \arrow{r} & K \arrow{r} & 0
\end{tikzcd}\]
So by theorem 7.4.1
\[  \sum_{i=-1}^{\infty} (-1)^i H_{C_i \otimes \hat{A}} = -1.   \]
Now $H_{C_i \otimes \hat{A}} = H_{C_i}H_{\hat{A}} = H_{C_i}H_A$. Hence
\[  H_A = ( H_{C_{-1}} - H_{C_0} + H_{C_1} - H_{C_2} + \dots )^{-1}.    \]
\end{proof}
\begin{example}
Let $A = \mathbb{K}\langle x,y \hspace{1mm}|\hspace{1mm}x^2+y^2\rangle$. The obstructions of $A$ are $x^2$ and $xy^2$ (the Gr\"{o}bner basis of $A$ is $\{ x^2+y^2, xy^2 - y^2x \}$). We have found that the $n$-chains are given by $x^ny^2$ and $x^{n+1}$, for $n > 0$. So we have,
\[ H_A^{-1} = ( 1- 2t +(t^2+t^3) -(t^3+t^4) + \dots ) = 1-2t+t^2.    \]
Hence
\[  H_A = \frac{1}{1-2t+t^2}.   \]
\end{example}

\section{Conclusion and Further work}
 In the proof of theorem 7.4.2, we have constructed a resolution for the normal words of our algebra. This allows us to find the Hilbert series for our normal words and hence for our original algebra. However this resolution only depends on normal words, so the rest of the structure of our algebra is lost. This is okay if we are only interested in the size of algebra. If we want more properties of the algebra then this leads to a construction of a resolution of the algebra itself. Along with this fact, that sequence and isomorphism of spaces with graduations $C_n \otimes \hat{A}$ and $C_n \otimes A$ leads us to think about the existence of a corresponding free resolution which was constructed by Anick. We refer [3] as a good reference to study Anick's resolution to the interested readers. 

\section{References}
\begin{enumerate}
\item Bruno Buchberger, An Algorithm for Finding the Basis Elements of the Residue Class Ring of a Zero Dimensional Polynomial Ideal, Ph.D. Dissertation,1965. \textit{Journal of Symbolic Computation}, \textbf{41}, 2006 (Translation).
\item Viviana Ene - J\"{u}rgen Herzog, \textit{Gr\"{o}bner Bases in Commutative Algebra}, Graduate Studies in Mathematics, Volume 130, AMS.
\item V.A.Ulfarovskij, \textit{ Combinatorial and Asymptotic Methods in Algebra}. pp.- 42-58. Algebra VI, Springer, 1995.
\end{enumerate}

\Addresses
\end{document}